\newtheorem{theorem}{Theorem}[section]
\newtheorem{proposition}[theorem]{Proposition}
\newtheorem{lemma}[theorem]{Lemma}
\newtheorem{remark}[theorem]{Remark}
\def\cP{\mathcal P}
\def\fqs{\mathbb{F}_{q^2}}
\def\fqt{\mathbb{F}_{q^3}}
\def\PG{{\rm{PG}}}
\def\GL{{\rm{GL}}}
\def\SL{{\rm{SL}}}
\def\ord{\mbox{\rm ord}}
\def\Alt{\mbox{\rm Alt}}
\def\Sym{\mbox{\rm Sym}}
\def\fq{{\mathbb F}_q}
\newcommand{\PSL}{\mbox{\rm PSL}}
\newcommand{\PGL}{\mbox{\rm PGL}}
\newcommand{\PSU}{\mbox{\rm PSU}}
\newcommand{\aut}{\mbox{\rm Aut}}
\newcommand{\diag}{\mbox{\rm diag}}
\title{The M\"obius function of $\PSL(3,2^{p})$ for any prime $p$}
\date{}
\author{Martino Borello, Francesca Dalla Volta, Giovanni Zini}
\begin{document}



\begin{abstract}
Let $G$ be the simple group ${\rm PSL}(3,2^{p})$, where $p$ is a
prime number. For any subgroup $H$ of $G$, we compute the M\"obius
function of $H$ in the subgroup lattice of $G$. To
this aim, we describe the intersections of maximal subgroups of $G$.
We point out some connections of the M\"obius function with
other combinatorial objects, and, in this context, we compute the
reduced Euler characteristic of the order complex of the subposet of
$r$-subgroups of ${\rm PGL}(3,q)$, for any prime $r$ and any prime
power $q$.
\end{abstract}

\maketitle

\begin{small}

{\bf Keywords:} M\"obius function, subgroup lattice, Euler characteristic

{\bf 2010 MSC:} 05E15, 20D30, 20D06

\end{small}

\section{Introduction}

Let $G$ be a finite group. The M\"obius function of $G$ is defined
recursively by $\mu(G)=1$ and $\mu(H)=-\sum_{K\,:\,H<K\leq G}\mu(K)$ for any $H<G$. 
It was introduced independently by Weisner \cite{Weisner} and Hall \cite{Hall}; in
particular, Hall provides a formula to enumerate generating tuples of elements of $G$.
For any group $K$, let $\sigma_n(K)$ and
$\phi_n(K)$ denote respectively the number of ordered $n$-tuples and generating $n$-tuples of
elements of $K$. 
Then $ \sigma_n(G)=\sum_{H:H\leq G}\phi_n(H)$, and by the M\"obius
inversion formula (\cite[Prop. 3.7.1]{Stanley}) one gets $
\phi_n(G)=\sum_{H:H\leq G}\sigma_n(H)\cdot\mu(H)$.
%
%


Let $\it {Prob}_G(n)$ be the probability that $n$ random elements of $G$ generate $G$. Then
$$\it {Prob}_G(n)=\frac {\phi_n(G)}{\mid G\mid ^n}=\sum_{H\leq G}\frac{\mu _G(H)}{[G:H]^n}.$$
It is possible to define the following complex function (see \cite{Mann1}):
$${P}_G(s)=\sum_ {n\in \mathbb{N}}\frac {a_n(G)}{n^s},\, s\in \mathbb{C},\, \, \textnormal{
where}\,\,  a_n(G)=\sum _{[G:H]=n }\mu _G(H).$$
It satisfies ${P}_G(t)= \it {Prob}_G(t)$ for $t\in \mathbb{N}$.

In case of a profinite group $G$, the above function is generalized as follows:
$$P_G(s)=\sum _{H\leq_{o}G}\frac {\mu _G (H)}{[G:H]^s},$$
where $H$ ranges over all open subgroups of $G$. Mann conjectured
that this sum is absolutely convergent in some half complex plane
for {\it {positively finitely generated (PFG) groups}}. This
conjecture is verified if the following two facts hold for any PFG
profinite group (see \cite{Mann1,Mann2}): $|\mu(H)|$ is bounded by a
polynomial function in $[G:H]$; the number of subgroups $H$ of index
$n$ satisfying $|\mu(H)|\ne0$ grows at most polynomially in $n$.
The conjecture was reduced by Lucchini \cite{Lucchini} to the following one:
there exist $c_1,c_2\in\mathbb{N}$ such that, for any almost simple group $G$, $|\mu(H)|\leq [G:H]^{c_1}$ for any $H<G$; and, for any $n\in\mathbb{N}$, the number of subgroups $H<G$ of index $n$ in $G$ satisfying $G=H\,{\rm soc}(G)$ and $\mu(H)\ne0$ is upper bounded by $n^{c_2}$.
This was proved in \cite{LucchiniColombo} in the case of alternating and symmetric groups.

Not very much is known about  the exact values of $\mu(H)$ when $G$ is a
simple group; up to our knowledge, the only infinite families of
non-abelian simple groups for which the M\"obius function is
completely known are the following.
\begin{itemize}
\item The groups $\PSL(2,q)$; for $q$ prime see \cite{Hall}, for any prime power $q$ see \cite{Downs}, where also the groups $\PGL(2,q)$ are completely worked out.
\item The Suzuki groups $Sz(q)$ for any non-square power $q$ of $2$; see \cite{DownsJones2016}.
\item The Ree groups $Ree(q)$ for any non-square power $q$ of $3$; see \cite{Pierro}.
\item The $3$-dimensional unitary groups $\PSU(3,2^{2^n})$ for any $n>0$; see \cite{Zini}.
\end{itemize}
For any of these families Mann's conjecture is verified. In this
paper we consider the case of $3$-dimensional projective general
linear groups $\PGL(3,q)$; again, Table \ref{tabella:mobius}
confirms Mann's conjecture when $q=2^p$ with prime $p$.

The main result of this paper, Theorem \ref{th:mobius}, provides the M\"obius function of the simple group $G=\PSL(3,2^p)$ for any odd prime $p$ (note that $G=\PGL(3,2^p)$). The subgroups with non-zero
M\"obius functions are summarized in Tables \ref{tabella:mobius}. In
the second column we specify the Aschbacher's classes in which $H$
is contained, with an N when $H$ is not maximal in $G$ \cite{low};
for example $\mathcal{C}_1,\mathcal{C}_2$ (N) means that $H$ is
intersection of maximal groups in $\mathcal{C}_1$ and
$\mathcal{C}_2$ and $H$ is not maximal.
For the sake of
completeness, Table \ref{tabella:mobius4} summarizes the case $p=2$, which can be easily computed using GAP, via the full table of marks of $\PSL(3,4)$.

We observe that, when considering the group $\PSL(3,2^n)$ with a non-prime $n$, other maximal subgroups appear, such as subgroups isomorphic to $\PSL(3,2^{n^\prime})$, or $\PGL(3,2^{n^\prime})$, or $\PSU(3,2^{n^\prime})$, or $\PG(3,2^{n^\prime})$, for certain divisors $n^\prime$ of $n$.
This makes the computations quite longer and not easily tractable. 

Throughout the paper we will use the following group-theoretic
notation which is based on the ATLAS \cite{ATLAS}: given two
subgroups $H$ and $K$ of $G$, $H\times K$ denotes their direct
product, $H:K$ a split extension of $H$ by $K$, and $H.\,K$ a
(split or non-split) extension of $H$ by $K$; $C_n$ a cyclic group
of order $n$; $D_n$ a dihedral group of order $n$;  $E_{p^n}$ an
elementary abelian group of order $p^n$; $H^{m+n}$ denotes the
extension $H^m.\, H^n$; $\Sym(n)$ and $\Alt(n)$ denote
respectively the symmetric and the alternating group of degree $n$;
${\rm Syl}_r(G)$ denotes a Sylow $r$-subgroup of the group $G$ under
consideration. For simplicity, we will use $S_r$ for ${\rm
Syl}_r(G)$; when $q$ is a power of $2$, $S_2\cong E_q^{1+2}$.

In order to prove Theorem \ref{th:mobius}, the intersections of
maximal subgroups of $G$, their conjugacy classes and normalizers
are carefully investigated and determined; for each subgroup, the
M\"obius function is computed. To this aim, we apply geometric
arguments regarding the geometry of $G$ and its subgroups in their
natural action on the plane $\PG(2,q)$ over $\mathbb{F}_q$, and more
generally in their action on the plane
$\PG(2,\overline{\mathbb{F}}_q)$ over the algebraic closure of
$\mathbb{F}_q$.


Finally, we point out that the M\"obius function of a finite group
$G$ has connections with different areas of
mathematics, in which the M\"obius inversion formula turns out to be
applicable. We list some objects whose enumeration can be performed by means of the M\"obius function of $G$.
\begin{enumerate}
\item Epimorphisms from a free group of finite rank to the group $G$; see \cite{DownsJones2016}.
\item Graphs $\tilde{\Gamma}$ which are a $G$-covering of a given graph $\Gamma$; see \cite{LLK}.
\item The structure of the group of units of the monoid of cellular automata over the configuration space $A^G$, for a given finite set $A$; see \cite[Section 4]{Castillo}.
\item Reduced Euler characteristic of the order complex of posets $\cP$ associated to $G$.
\end{enumerate}

We will explicitly work out the computation of point (4) in Section \ref{sec:connections}, where we consider the order complex of the finite poset $\cP=L_r$ of $r$-subgroups of $\PGL(3,q)$ ordered by inclusion, for any prime power $q$ and any prime $r$.
The results are summarized in Table \ref{tabella:euler}.

This paper is organized as follows. Section \ref{sec:prelim} contains preliminary results on the M\"obius function of a finite group, and on the groups $\PGL(3,q)$. In Section \ref{sec:mainresult} the main result on the M\"obius function of $\PSL(3,2^p)$ is stated, namely Theorem \ref{th:mobius}. Section \ref{sec:proof} provides the proof of Theorem \ref{th:mobius}. Finally, Section \ref{sec:connections} computes the reduced Euler characteristic of certain ordered complexes associated to $\PGL(3,q)$.

\section{Preliminary results}\label{sec:prelim}

For any locally finite poset $(\cP,\preceq)$, define the M\"obius function
$\mu_{\cP}:\cP\times\cP\to\mathbb{Z}$ by
$$ \mu_\cP(x,y)= 0 \ \textrm{if} \ x\not\preceq y,
\quad \mu_\cP(x,x)=1,\quad \mu_\cP(x,y)=-\sum_{z\in\cP\,:\,x\prec
z\preceq y}\mu_\cP (z,y) \ \textrm{if}\ x\prec y. $$
For $x\prec y$, $\mu_{\cP}$ is equivalently defined by $
\mu_{\cP}(x,y)=-\sum_{z\in\cP\,:\,x\preceq z\prec y}\mu_{\cP}(x,z)$.
We will consider the poset $\cP=L$ of subgroups of a finite group
$G$, ordered by inclusion; $L$ is a lattice with greatest element
$G$ and least element $\{1\}$. For simplicity, we denote by $\mu(H)$
the M\"obius function of $H\leq G$. The function
$\mu:L\to\mathbb{Z}$, $H\mapsto\mu(H)$ will be called the
\emph{M\"obius function} of $G$.

Clearly, if $H,K\leq G$ are conjugated, then
$\mu(H)=\mu(K)$. The following property restricts
the investigation to the intersections of maximal subgroups of $G$.
\begin{theorem}\label{th:nonzero}\textrm{(\cite[Theorem 2.3]{Hall})}
If $H\leq G$ satisfies $\mu(H)\ne0$, then $H$ is the intersection of
maximal subgroups of $G$.
\end{theorem}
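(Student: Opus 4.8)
The plan is to prove the contrapositive: if $H$ is not the intersection of the maximal subgroups of $G$ containing it, then $\mu(H)=0$. First I would recast $\mu(H)$ as a lattice M\"obius value. With $L$ the subgroup lattice, the recursion defining $\mu$ says exactly that $\mu(H)=\mu_L(H,G)$, the interval M\"obius function from $H$ to $G$. Let $\cM$ be the set of maximal subgroups of $G$ containing $H$ and set $M=\bigcap_{K\in\cM}K$. Since $G$ is finite and $H<G$, the set $\cM$ is nonempty and $M\le K<G$ for any $K\in\cM$, so $H\le M<G$. A short check shows that $H$ is an intersection of maximal subgroups if and only if $H=M$: one inclusion is clear, and if $H=\bigcap_{K\in\cS}K$ for some family $\cS$ of maximal subgroups then each $K\in\cS$ lies in $\cM$, whence $M\le\bigcap_{K\in\cS}K=H$. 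Thus the hypothesis of the contrapositive becomes $H<M$.

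Next I would invoke Weisner's theorem (\cite[Cor.~3.9.3]{Stanley}) inside the interval $[H,G]$, whose bottom and top are $H$ and $G$ and whose coatoms are precisely the members of $\cM$. Weisner's theorem states that for any element $a$ of a finite lattice with $a>\hat{0}$ one has $\sum_{x\,:\,x\vee a=\hat{1}}\mu_L(\hat{0},x)=0$; if preferred it admits a two-line proof, since for every $c\ge a$ the equality $\{x:x\vee a\le c\}=\{x:x\le c\}$ gives $\sum_{x\,:\,x\vee a\le c}\mu_L(\hat{0},x)=\sum_{x\,:\,\hat{0}\le x\le c}\mu_L(\hat{0},x)=0$, and M\"obius inversion over $[a,\hat{1}]$ then forces each inner sum $\sum_{x\,:\,x\vee a=b}\mu_L(\hat{0},x)$ to vanish. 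I would apply this with $\hat{0}=H$, $\hat{1}=G$, and $a=M$; the required condition $a>\hat{0}$ holds precisely because we assumed $H<M$.

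It then remains to identify the surviving terms. In the subgroup lattice the join is $x\vee M=\langle x,M\rangle$. If $x\in[H,G]$ with $x\ne G$, then $x$ is a proper subgroup containing $H$, hence $x\le K$ for some $K\in\cM$; as $M\le K$ as well, we get $\langle x,M\rangle\le K<G$, so $x\vee M\ne G$. Therefore the only $x\in[H,G]$ with $x\vee M=G$ is $x=G$, and Weisner's identity collapses to the single term $\mu_L(H,G)=0$, that is $\mu(H)=0$. The one genuine point requiring care — and the place where the hypothesis $H<M$ is actually used — is the verification that $a=M$ satisfies $H<M<G$ together with the join computation that isolates exactly $x=G$; everything else is the standard Weisner machinery, which I would either cite or reprove in the two lines indicated above.
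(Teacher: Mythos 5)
Your proof is correct and complete. One thing to be aware of: the paper itself offers no proof of this statement --- it is quoted verbatim as Theorem 2.3 of Hall's 1936 paper \cite{Hall} with a citation --- so your argument fills a gap rather than paralleling anything in the text, and what you have written is the standard modern derivation. All the steps check: the identification $\mu(H)=\mu_L(H,G)$ is exactly the paper's own remark in Section 2 that the top-down recursion $\mu(H)=-\sum_{K\,:\,H<K\leq G}\mu(K)$ computes the two-variable lattice M\"obius function on the interval $[H,G]$; your reduction of ``$H$ is an intersection of maximal subgroups'' to $H=M$, where $M$ is the meet of the coatoms of $[H,G]$, is right (with the harmless convention that $H=G$ is the empty intersection, so the contrapositive hypothesis forces $H<G$ and $\mathcal{M}\neq\emptyset$); your two-line proof of Weisner's theorem is sound, since for $c\geq a$ one has $\{x:x\vee a\leq c\}=\{x:x\leq c\}$, the inner sums then telescope to zero by induction up $[a,\hat{1}]$ starting from $c=a$, and the hypothesis $a>\hat{0}$ is used exactly where you flag it; and the collapse of the sum to the single term $x=G$ is justified because any proper $x\geq H$ lies in some maximal $K\in\mathcal{M}$, which also contains $M$, so $\langle x,M\rangle\leq K<G$. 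An equivalent packaging, for your reference, is Crapo's closure theorem \cite{Crapo} applied to the closure operator sending $x\in[H,G)$ to the intersection of the maximal subgroups above it: the M\"obius function $\mu_L(\cdot,G)$ vanishes on non-closed elements, which is precisely the statement at hand; your Weisner route proves the same fact with slightly less machinery.
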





Let $q$ be a prime power; we consider the group $\PGL(3,q)$.
Note that, when $q=2^p$ with $p$ an odd prime,
$\PGL(3,q)=\PSL(3,q)$; $q=2^p$ implies also
$\PSL(3,q)=\SL(3,q)$, which allows us to use matrices to denote the
elements of $\PSL(3,q)$.

The classification of subgroups of $\PGL(3,q)$ goes back to Mitchell
\cite{Mitchell} and Hartley \cite{Hartley} . We refer to \cite{Mitchell,Hartley,King} for the proof of the
following classical results, and to \cite{HP} for a general
reference on projective planes.


\begin{theorem}\label{th:psl3q}
For any prime power $q$, the following are self-normalizing maximal subgroups of
$\PGL(3,q)$, and they are unique up to conjugation:
\begin{enumerate}
\item the stabilizer $E_{q^2}:\GL(2,q)$ of an $\fq$-rational point, of order $q^3(q-1)^2(q+1)$;
\item the stabilizer $E_{q^2}:\GL(2,q)$ of an $\fq$-rational line, of order $q^3(q-1)^2(q+1)$;
\item the stabilizer $(C_{q-1})^2: {\rm Sym}(3)$ of an $\fq$-rational triangle, of order $6(q-1)^2$;
\item the stabilizer $C_{q^2+q+1}: C_3$ of an $\fqt\setminus\fq$-rational triangle, of order $3(q^2+q+1)$.
\end{enumerate}
If $q=2^p$ with $p$ an odd prime, the only other maximal subgroup of
$\PGL(3,q)$ up to conjugation is the following:
\begin{itemize}
\item the stabilizer of a subplane of order $2$, of order $168$ and isomorphic to $\PSL(3,2)$.
\end{itemize}
\end{theorem}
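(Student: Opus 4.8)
The plan is to derive this classification from Aschbacher's theorem on maximal subgroups of finite classical groups, specialised to dimension $3$, where it reduces essentially to Mitchell's geometric description of the subgroups of $\PGL(3,q)$ in its action on $\PG(2,q)$. Aschbacher's theorem distributes the maximal subgroups among the geometric classes $\cC_1,\dots,\cC_8$ and the class $\cS$ of almost simple groups acting irreducibly. In dimension $3$ the geometric members are quickly enumerated: $\cC_1$ contributes the stabilisers of a point and of a line (the two maximal parabolics), $\cC_2$ the stabiliser of a decomposition of the natural module into three lines, and $\cC_3$ the normaliser of a Singer cycle coming from the degree-$3$ extension $\fqt/\fq$. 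Because $3$ is prime, the tensor classes $\cC_4$ and $\cC_7$ are empty, and $\cC_6$ can involve only the extraspecial group $3^{1+2}$, whose normaliser occurs solely when $3\mid q-1$.

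For part (1), valid for every prime power $q$, I would analyse the four families directly. The point and line stabilisers are maximal parabolic subgroups, hence self-normalising, and since $\PGL(3,q)$ is transitive on points and on lines of $\PG(2,q)$ each forms a single conjugacy class; a short count gives the order $q^3(q-1)^2(q+1)$. The $\fq$-rational triangle stabiliser is the normaliser of the diagonal torus extended by the Weyl group, i.e.\ the monomial subgroup $(C_{q-1})^2:\Sym(3)$ of order $6(q-1)^2$; transitivity of $\PGL(3,q)$ on unordered non-degenerate triangles yields uniqueness up to conjugacy. For the non-rational triangle one takes a Singer cycle of order $q^2+q+1$ acting irreducibly and cyclically permuting a Frobenius-orbit of three points of $\PG(2,\fqt)$ not defined over $\fq$; its normaliser is $C_{q^2+q+1}:C_3$, the extra $C_3$ being induced by $\gal(\fqt/\fq)$. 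In each case I would confirm self-normality by a direct normaliser computation, noting that maximality is to be checked (with the standard small-$q$ exceptions).

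For part (2) I would run through the remaining possibilities under the hypothesis $q=2^p$ with $p$ an odd prime. Here $q-1=2^p-1\equiv 1\pmod 3$, so $3\nmid q-1$ and $\cC_6$ is empty; $\cC_8$ is empty as well, since odd dimension admits no symplectic form and $q=2^p$ is not a square, so no unitary subfield subgroup arises. The subfield class $\cC_5$ consists of the groups $\PGL(3,2^d)$ with $d\mid p$; as $p$ is prime, $\mathbb{F}_2$ is the only proper subfield, giving the single subfield subgroup $\PGL(3,2)=\PSL(3,2)$ of order $168$.

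The delicate step, which I expect to be the main obstacle, is the treatment of the class $\cS$: one must show that no almost simple group acting irreducibly in dimension $3$ yields a maximal subgroup other than $\PSL(3,2)$. The candidates appearing in Mitchell's list, such as $\Alt(5)$, $\Alt(6)$, or $\PSL(2,7)\cong\PSL(3,2)$, embed only under arithmetic conditions on $q$ that fail for $q=2^p$ with $p$ an odd prime, and the one surviving copy of $\PSL(3,2)$ is precisely the subfield subgroup already located in $\cC_5$. It remains to verify that this $\PSL(3,2)$ is genuinely maximal, which follows since it acts irreducibly and primitively on $\PG(2,q)$ and so lies in none of the reducible or imprimitive geometric stabilisers, and to confirm that it is self-normalising; this closes the list.
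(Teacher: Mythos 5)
Your proposal is correct in substance, but note first what it is being compared against: the paper does not prove Theorem~\ref{th:psl3q} at all --- it quotes it as classical, with references to Mitchell \cite{Mitchell}, Hartley \cite{Hartley} and King \cite{King}. You instead reconstruct the statement from Aschbacher's theorem in the style of \cite{low}, and this reconstruction is sound: in dimension $3$ the classes $\cC_4$ and $\cC_7$ are vacuous, $\cC_1$, $\cC_2$, $\cC_3$ give exactly the four geometric stabilizers (with uniqueness up to conjugacy from the transitivity statements the paper records in Remark~\ref{lemma:transitivity}, and self-normalization immediate from maximality since none of these subgroups is normal in $\PGL(3,q)$), and for $q=2^p$ with $p$ an odd prime your arithmetic is right: $2^p-1\equiv 1\pmod 3$ kills $\cC_6$, odd dimension and non-square even $q$ kill $\cC_8$, and primality of $p$ reduces $\cC_5$ to the single class of subplane stabilizers $\PSL(3,2)$ (one class indeed, since $\gcd(3,q-1)=1$, and since $G$ is transitive on frames). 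What each approach buys: the Aschbacher route modularizes the problem and makes the "no other maximal subgroups" part transparent except in class $\cS$, where --- as you candidly admit --- you must fall back on the Mitchell--Hartley lists (or the BHRD tables, which rest on them): concretely, $\Alt(6)$ needs $\mathbb{F}_4\subseteq\fq$, impossible for odd $p$; $\Alt(5)$ has no $3$-dimensional irreducible module in characteristic $2$ (its $2$-modular irreducibles have dimensions $1,2,2,4$); and $\PSL(2,7)\cong\PSL(3,2)$ is realizable over $\mathbb{F}_2$ and hence coincides with the $\cC_5$ subgroup. So your argument is a clean modern repackaging rather than an independent proof, which is entirely appropriate for a quoted classical result. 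One further point in your favor: your parenthetical about "standard small-$q$ exceptions" is not mere caution --- the paper's blanket "for any prime power $q$" is literally slightly inaccurate, since by the tables in \cite{low} the monomial subgroup $(C_{q-1})^2:\Sym(3)$ is maximal only for $q\geq 5$ (for $q=2$ it lies in $\Sym(4)$) and the Singer normalizer fails to be maximal for $q=4$; this is harmless for the paper, which only uses $q=2^p\geq 8$, but your version of the statement is the more precise one.
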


It follows immediately that every Sylow subgroup of
$\PGL(3,q)$ in contained in one of the maximal subgroups (1) to (4)
in Theorem \ref{th:psl3q}.
For the reader's convenience, we recall in Remark
\ref{remark:action} which points, lines or triangles in
$\PG(2,\overline{\mathbb{F}}_q)$ are stabilized by any element
$\sigma\in\PGL(3,q)$, in terms of $\ord(\sigma)$.

\begin{remark}\label{remark:action}
Let $q$ be a power of a prime $r$ and
$\sigma\in\PGL(3,q)\setminus\{1\}$. Then one of the following cases
holds.
\begin{itemize}
\item $\ord(\sigma)=r$ and $\sigma$ is an elation, i.e. $\sigma$ stabilizes every line through an $\fq$-rational point $C$ and every point of an $\fq$-rational line $\ell$ passing through $C$; $C$ and $\ell$ are called the center and the axis of $\sigma$.
\item $\ord(\sigma)=r\ne2$, or $r=2$ and $ord(\sigma)=4$. Also, $\sigma$ stabilizes exactly one point $P$ and one line $\ell$; both $P$ and $\ell$ are $\fq$-rational, and $P\in\ell$.
\item $\ord(\sigma)\mid(q-1)$ and $\sigma$ is a homology, i.e. $\sigma$ stabilizes every line through an $\fq$-rational point $C$ and every point of an $\fq$-rational line not passing through $C$; $C$ and $\ell$ are the center and the axis of $\sigma$.
\item $\ord(\sigma)=r\cdot d$ with $1\ne d\mid(q-1)$; $\sigma$ stabilizes two $\fq$-rational points $C$ and $P$, the line $CP$, and another $\fq$-rational line passing through $P$.
\item $2\ne\ord(\sigma)\mid(q-1)$ and $\sigma$ stabilizes three non-collinear $\fq$-rational points $P,Q,R$ and the lines $PQ,PR,QR$.
\item $\ord(\sigma)\mid(q^2-1)$ and $\ord(\sigma)\nmid(q-1)$. Also, $\sigma$ stabilizes an $\fq$-rational point $P$ and two $\fqs\setminus\fq$-rational points $Q,R$ which are conjugated under the $\fq$-Frobenius collineation: $Q^q=R$, $R^q=Q$; $\sigma$ stabilizes the $\fq$-rational line $QR$ and the $\fqs\setminus\fq$-rational lines $PQ$ and $PR$.
\item $\ord(\sigma)\mid(q^2+q+1)$ and $\sigma$ stabilizes three non-collinear $\fqt\setminus\fq$-rational points $P,Q,R$ which are an orbit of the $\fq$-Frobenius collineation;
$\sigma$ stabilizes the $\fqt\setminus\fq$-rational lines $PQ$, $PR$, $QR$.
\end{itemize}
\end{remark}

Remark \ref{lemma:transitivity} lists some of the
 actions of $\PGL(3,q)$, which will be used in the proof of Theorem \ref{th:mobius}.


\begin{remark}\label{lemma:transitivity}
Let $q$ be a prime power and $G=\PGL(3,q)$.
\begin{itemize}
\item $G$ is $2$-transitive on the points of $\PG(2,q)$.
\item $G$ is transitive on the points of $\PG(2,q^2)\setminus\PG(2,q)$; the stabilizer in $G$ of a point $P\in\PG(2,q^2)\setminus\PG(2,q)$ stabilizes also its Frobenius conjugate $P^q$.
\item $G$ has two orbits on the points of $\PG(2,q^3)\setminus\PG(2,q)$; namely, one is made by the
points on the $\fq$-rational lines, the other is made by the remaining points.
\item $G$ is $2$-transitive on the $\fq$-rational lines.
\item $G$ is transitive on the $\fq$-rational point-line pairs $(P,\ell)$ with $P\in\ell$.
\item $G$ is transitive on the $\fq$-rational point-line pairs $(P,\ell)$ with $P\notin\ell$.
\item $G$ is transitive on the non-collinear triples $(P,Q,R)$ of $\fq$-rational points.
\item $G$ is transitive on the $\fqt\setminus\fq$-rational triangles $\{P,P^q,P^{q^2}\}\subset\PG(2,q^3)\setminus\PG(2,q)$ left invariant by the $\fq$-Frobenius collineation.
\item $G$ is transitive on the projective frames of $\PG(2,q)$, i.e. on the $4$-tuples of $\fq$-rational points no three of which are collinear.
\end{itemize}
\end{remark}

\section{The M\"obius function of $\PSL(3,2^p)$ for any odd prime $p$}\label{sec:mainresult}

We state the main result, Theorem \ref{th:mobius}, whose proof is worked out in Section \ref{sec:proof}.
We assume that  $p$ is an odd prime and $q=2^p$,
so that $G=\PSL(3,q)=\PGL(3,q)$.
The main argument in the proof is to find which subgroups of $G$ are intersection of maximal subgroups.
Roughly speaking,  we start with the intersection of two
maximal subgroups $M_1$ and $M_2$. Through  the geometry of $M_1$
and $M_2$, we determine the structure of $M_1\cap
M_2$ and we are able to identify which other maximal subgroups
of $G$ contain $M_1\cap M_2$ ; also, we study whether the group
$M_1\cap M_2$ is unique up to conjugation in $G$. Clearly, the group
$M_1\cap M_2$ may vary when $M_1$ and $M_2$ run in their conjugacy
classes. For instance, if $M_1$ is the stabilizer of a point
$P\in\PG(2,q)$ and $M_2$ is the stabilizer of an $\fq$-rational
triangle $T$, then $M_1\cap M_2\cong(C_{q-1})^2: C_2$ if $P$ is a
vertex of $T$; $M_1\cap M_2\cong C_{2(q-1)}$ if $P$ is on a side of
$T$ but not a vertex; and $M_1\cap M_2\cong \Sym(3)$ if $P$ is not
on a side of $T$.
We continue by intersecting $M_1\cap M_2$ with other maximal subgroups, stopping when the geometry of the chosen maximal subgroups forces their intersection to be trivial.

\begin{theorem}\label{th:mobius}
The subgroups $H<G$ which are intersection of maximal subgroups of
$G$ are exactly the groups in Table \ref{tabella:teorema}, where the
normalizer $N_G(H)$ and the M\"obius function $\mu(H)$ are provided.
For any such $H$ there is just one conjugacy class in $G$.
\end{theorem}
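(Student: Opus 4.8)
The plan is to enumerate, up to conjugacy, all subgroups of $G$ arising as intersections of maximal subgroups, and for each one determine $N_G(H)$ and $\mu(H)$. By Theorem \ref{th:nonzero}, no subgroup outside this list can have nonzero M\"obius function, so it suffices to analyze precisely these intersections. The starting data are the five conjugacy classes of maximal subgroups listed in Theorem \ref{th:psl3q}: the two $E_{q^2}:\GL(2,q)$ stabilizers (of a point and of a line), the triangle stabilizer $(C_{q-1})^2:\Sym(3)$, the ``twisted'' triangle stabilizer $C_{q^2+q+1}:C_3$, and the subplane stabilizer $\PSL(3,2)$. First I would set up the bookkeeping: for a pair $(M_1,M_2)$ of maximal subgroups, the $G$-orbits of such pairs (equivalently the double cosets, or the relative geometric configurations) are controlled by the transitivity properties collected in Remark \ref{lemma:transitivity}. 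For each orbit of configurations I would compute $M_1\cap M_2$ using the geometric description of elements in Remark \ref{remark:action}, exactly as illustrated in the sample computation preceding the theorem (point versus triangle giving $(C_{q-1})^2:C_2$, $C_{2(q-1)}$, or $\Sym(3)$ according to the incidence of $P$ with $T$).

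The core of the argument is then an iterative descent: starting from each two-fold intersection, I would keep intersecting with further maximal subgroups, at each stage translating the algebraic condition ``$H$ fixes these points/lines'' into a geometric configuration in $\PG(2,\overline{\mathbb{F}}_q)$, and using the classification in Remark \ref{remark:action} to read off the isomorphism type of the resulting group. The process terminates once the accumulated geometric constraints force the common stabilizer to be trivial (for instance, once enough points in general position are fixed that only the identity survives, by transitivity on projective frames). Throughout I would verify the claim that each intersection type forms a single conjugacy class in $G$: this follows from the transitivity statements in Remark \ref{lemma:transitivity}, since $G$ acts transitively on the geometric configuration defining $H$, and the normalizer $N_G(H)$ is computed as the full stabilizer of that configuration (using self-normalizing maximality of the $M_i$ where relevant).

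Once the lattice of intersection subgroups is fully described, the M\"obius values are obtained by the recursion $\mu(H)=-\sum_{H<K\le G}\mu(K)$, worked from the top down: $\mu(G)=1$, then $\mu(M)=-1$ for each maximal $M$ (each being self-normalizing and contained only in $G$), and thereafter each lower intersection $H$ is handled once the finitely many $K$ with $H<K\le G$, together with their multiplicities, are known. The multiplicities---the number of conjugates of each overgroup $K$ containing a fixed $H$---are the genuinely delicate input, and these too are extracted from the geometry: the number of $K\supseteq H$ equals the number of configurations of $K$'s type compatible with the configuration fixed by $H$.

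The main obstacle I anticipate is twofold. First, the combinatorial control of the overgroup counts: for a given intermediate $H$ one must determine \emph{exactly} how many maximal (and intermediate) subgroups of each class contain it, and an error in any multiplicity propagates through the entire recursion. Second, and more subtle, is handling the subplane stabilizers $\PSL(3,2)$, which exist only because $q=2^p$ and do not fit the generic homology/elation picture of Remark \ref{remark:action}; their intersections with the other four families (and with each other) require separate geometric analysis of Fano subplanes of $\PG(2,q)$, and verifying single-conjugacy-class and computing the relevant intersection types here is where the reasoning is least routine. The parity hypothesis that $p$ is an odd prime is what keeps this list finite and excludes the extra maximal subgroups (subfield $\PSL$, $\PGU$, etc.) mentioned in the introduction, so I would use it to justify that the five classes of Theorem \ref{th:psl3q} are exhaustive before beginning the descent.
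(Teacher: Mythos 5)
Your proposal follows essentially the same route as the paper: an iterative geometric descent through intersections of the five maximal classes of Theorem \ref{th:psl3q}, using Remarks \ref{remark:action} and \ref{lemma:transitivity} to identify intersection types, single conjugacy classes, and normalizers, followed by a top-down M\"obius recursion whose overgroup multiplicities are counted via incidence configurations (the paper does exactly this in Propositions \ref{prop:conjugatesandnormalizers}--\ref{prop:mobius}, including your anticipated delicate points: double-counting arguments for the overgroup multiplicities and separate treatment of the $\PSL(3,2)$ subplane stabilizers, resolved by {\sc Magma} checks when $p=3$). The approach is correct and matches the paper's proof in structure and substance.
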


\section{Proof of Theorem \ref{th:mobius}}\label{sec:proof}

We use the same notation as in Section \ref{sec:mainresult}. The
proof is divided into the following
steps: for any $H$ in Table
\ref{tabella:teorema}, we prove that $G$ has exactly one conjugacy class, and we determine $N_G(H)$ (Proposition \ref{prop:conjugatesandnormalizers}); we show that the intersections of maximal subgroups of $G$ are exactly the groups in Table \ref{tabella:teorema}
(Propositions \ref{prop:isintersection} and \ref{prop:ismaxint}); for any $H$ in Table \ref{tabella:teorema}, we determine $\mu(H)$ (Proposition \ref{prop:mobius}).

\begin{proposition}\label{prop:conjugatesandnormalizers}
For any group $H$ in Table \ref{tabella:teorema} there is exactly
one conjugacy class in $G$, and $N_G(H)$ is as in Table
\ref{tabella:teorema}.
\end{proposition}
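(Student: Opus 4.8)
The plan is to proceed through the list of subgroups $H$ in Table \ref{tabella:teorema} essentially in order of increasing complexity, and for each one to realize $H$ geometrically as the stabilizer (or pointwise/setwise stabilizer) of a configuration in $\PG(2,\overline{\mathbb{F}}_q)$, so that both the conjugacy and the normalizer claims become transitivity statements about the action of $G$. Concretely, for each $H$ I would first identify the geometric object $\Omega_H$ whose stabilizer realizes $H$ — a point, a line, an incident or non-incident point-line pair, a rational triangle, a Frobenius-stable triangle over $\fqt$, a subplane of order $2$, or a combination thereof arising as an intersection of the maximal subgroups of Theorem \ref{th:psl3q}. The dictionary between element orders and fixed configurations in Remark \ref{remark:action}, together with the transitivity list in Remark \ref{lemma:transitivity}, is exactly the toolbox for this.

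Once $H$ is written as $\mathrm{Stab}_G(\Omega_H)$, uniqueness up to conjugation reduces to showing that $G$ acts transitively on the set of configurations of the same type as $\Omega_H$; most of these transitivity statements are listed verbatim in Remark \ref{lemma:transitivity} (e.g. $2$-transitivity on points and on $\fq$-rational lines, transitivity on incident and on non-incident point-line pairs, on non-collinear triples, on Frobenius triangles over $\fqt$, and on projective frames). For the configurations not directly covered there — notably the subplanes isomorphic to $\PSL(3,2)$ and the various small intersection groups — I would argue transitivity either by a direct counting/orbit-stabilizer computation, comparing $[G:\mathrm{Stab}]$ with the number of configurations, or by invoking Theorem \ref{th:psl3q}, which already asserts that each maximal subgroup is unique up to conjugation and self-normalizing. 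For the normalizer, the key observation is that $N_G(H)$ must permute the canonical geometric data attached to $H$ (its fixed points, fixed lines, center/axis, the vertices of its invariant triangle, etc.), since these are intrinsic to $H$ and hence preserved by any element normalizing $H$; this pins $N_G(H)$ inside the stabilizer of that data, and a short argument identifying which symmetries of the configuration are actually induced by $G$ yields the stated normalizer. When $H$ is itself one of the maximal subgroups, self-normalization from Theorem \ref{th:psl3q} gives $N_G(H)=H$ immediately.

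The main obstacle I anticipate is twofold. First, the bookkeeping: Table \ref{tabella:teorema} contains many small groups (cyclic, dihedral, $\Sym(3)$, various $(C_{q-1})^2$-type extensions, Sylow subgroups $E_q^{1+2}$) arising as intersections, and for each I must produce the correct canonical configuration and verify that no \emph{extra} geometric object is fixed that would shrink the conjugacy class or enlarge the normalizer; the examples in Section \ref{sec:mainresult} (point versus triangle intersections giving $(C_{q-1})^2{:}C_2$, $C_{2(q-1)}$, or $\Sym(3)$ according to incidence) show how delicate this case-splitting is. Second, and more seriously, the normalizer computation is the genuinely hard step for the intermediate groups: I must rule out the possibility that $N_G(H)$ realizes a symmetry of the configuration — for instance swapping two fixed points or cyclically permuting a triangle's vertices — that is not already contained in the naive stabilizer, and conversely confirm that every such symmetry expected from the combinatorics of the configuration is genuinely induced by an element of $G$. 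Here I expect to rely on the structure of $H$ itself (its characteristic subgroups and center, whose fixed loci are forced to be $N_G(H)$-invariant) to bound $N_G(H)$ from above, and on explicit elements of $G$ realizing the configuration's symmetries to bound it from below, matching the entries in Table \ref{tabella:teorema}.
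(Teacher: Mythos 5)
Your plan coincides essentially with the paper's actual proof: it too proceeds line by line through Table \ref{tabella:teorema}, identifies each $H$ through the geometric data it fixes (via Remark \ref{remark:action}), derives uniqueness of the conjugacy class from the transitivity statements of Remark \ref{lemma:transitivity} (frequently via transitivity on projective frames, since for the small subgroups $H$ is \emph{determined by}, rather than equal to the stabilizer of, its configuration --- a nuance your stabilizer framing glosses over but which your frame remark implicitly covers), and bounds $N_G(H)$ from above by invariance of the fixed data and of characteristic subgroups, from below by explicit matrix elements, exactly as you propose. The only ingredient you omit is that for $p=3$ the paper settles Lines 22 and 24 by direct inspection with {\sc Magma} rather than by the geometric argument.
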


\begin{proof}
We consider the groups $H$ according to their line in Table
\ref{tabella:teorema}.
\begin{itemize}
\item {\bf Lines 1 to 5.} $H$ is a maximal subgroup; the claim follows from Theorem \ref{th:psl3q}.

\item {\bf Line 31, $H=\{1\}$.} The claim is trivial.

\item {\bf Line 30, $H\cong C_2$.} The involution $\alpha$ of $H$ is an elation, and hence is uniquely determined by its center $P$, its axis $\ell$, and its action on a third point not on $\ell$. Hence, there is just one conjugacy class for $H$ by Lemma \ref{lemma:transitivity}.
Also, $N_G(H)$ stabilizes $P$ and $\ell$. Thus, up to conjugation,
$P=(1:0:0)$, $\ell:Z=0$, and
\begin{footnotesize}
\begin{equation}\label{eq:normalizerinvolution}
 \alpha = \begin{pmatrix} 1 & 0 & 1 \\ 0 & 1 & 0 \\ 0 & 0 & 1 \end{pmatrix},\ N_G(H) = \left\{\sigma_{a,b,c,\lambda}= \begin{pmatrix} 1 & a & b \\ 0 & \lambda & c \\ 0 & 0 & 1 \end{pmatrix} : a,b,c\in\fq,\lambda\in\fq^* \right\}\cong E_q^{1+2}: C_{q-1}.
 \end{equation}
\end{footnotesize}


\item
{\bf Line 29, $H\cong C_3$.} Since $3\mid(q+1)$, the fixed points of
a generator $\alpha$ of $H$ are three non-collinear points
$P\in\PG(2,q)$, $Q,R\in\PG(2,q^2)\setminus\PG(2,q)$; note that
$R=Q^q$ and the line $\ell=QR$ is $\fq$-rational. By Lemma
\ref{lemma:transitivity} $P$ and $\ell$ are unique up to
conjugation; since $G_{P,\ell}$ acts as $\GL(2,q)$ on $\ell$,
$\{Q,R\}$ is also unique up to conjugation. Hence, there is just one
conjugacy class for $H$.
The pointwise stabilizer of $\{P,Q,R\}$ in $G$ is cyclic of order
$q^2-1$. Also, $N_G(H)$ stabilizes $P$ and acts on $\{Q,R\}$. Thus,
$|N_G(H)|=2(q^2-1)$ and $N_G(H)\cong C_{q^2-1}:C_2$.



\item
{\bf Line 28, $H\cong E_4\leq G_{P_1,P_2,P_3}$.} Here, $P_1,P_2,P_3$
are three collinear $\fq$-rational points. For $i\in\{1,2,3\}$ let
$\alpha_i\in G$ be the elation with center $P_i$ and axis $\ell=P_1
P_2 P_3$, with $\alpha_3=\alpha_1\alpha_2$ and
$H=\langle\alpha_1,\alpha_2\rangle\cong E_4$. For any
$P\in\PG(2,q)\setminus\ell$, the set $F=\{P_1,P_2,P,\alpha_3(P)\}$
is a projective frame of $\PG(2,q)$. Also, $H$ is uniquely
determined by $F$; in fact, $\alpha_1(P)=P P_1 \cap P_2\alpha_3(P)$
and $\alpha_2(P)=P P_2 \cap P_1\alpha_3(P)$. Then there is just one
conjugacy class for $H$ by Lemma \ref{lemma:transitivity}.

The normalizer $N_G(H)$ acts on $\{P_1,P_2,P_3\}$; for any
$\sigma\in N_G(H)$, $\sigma(P_i)=P_j$ implies
$\sigma\alpha_i\sigma^{-1}=\alpha_j$. Thus, the pointwise stabilizer
$S$ of $\{P_1,P_2,P_3\}$ in $N_G(H)$ is made by those $\sigma\in
N_G(H)$ which commute with $\alpha_i$ for any $i$ and stabilize
$\ell$ pointwise. Since no homology with axis $\ell$ commutes with
an elation with axis $\ell$, $S$ is only made by elations with axis
$\ell$. Since two elations commute if and only if they have the same
center or axis, this implies $S\cong E_{q^2}$. Now, $N_G(H)/E_{q^2}$
acts faithfully on $\{P_1,P_2,P_3\}$ and hence is a subgroup of
$\Sym(3)$. Finally we can choose $H$ up to conjugation and obtain
$N_G(H)$ as follows:
\begin{footnotesize}
$$
\alpha_1=\begin{pmatrix} 1 & 0 & 1 \\ 0 & 1 & 0 \\ 0 & 0 & 1
\end{pmatrix},\quad \alpha_2=\begin{pmatrix} 1 & 0 & 0 \\ 0 & 1 & 1
\\ 0 & 0 & 1 \end{pmatrix}, \quad \sigma_{0,b,c,1}=\begin{pmatrix} 1
& 0 & b \\ 0 & 1 & c \\ 0 & 0 & 1 \end{pmatrix}, \quad
\tau=\begin{pmatrix} 0 & 1 & 0 \\ 1 & 1 & 0 \\ 0 & 0 & 1
\end{pmatrix}, \quad \omega=\begin{pmatrix} 0 & 1 & 0 \\ 1 & 0 & 0
\\ 0 & 0 & 1 \end{pmatrix}.
$$
\end{footnotesize}
By direct computation,
$N_G(H)=\{\sigma_{0,b,c,1}:b,c\in\fq\}:\langle\tau,\omega\rangle\cong
E_{q^2}:\Sym(3)$.

\item
{\bf Line 27, $H\cong E_4\leq G_{\ell_1,\ell_2,\ell_3}$.} Here,
$\ell_1,\ell_2,\ell_3$ are three $\fq$-rational lines, concurrent in
$P$. Let $H=\{1,\alpha_1,\alpha_2,\alpha_3=\alpha_1\alpha_2\}\cong
E_4$, where $\alpha_i$ has center $P$ and axis $\ell_i$. A dual argument with respect to the one used in the previous point, shows that there is just one conjugacy class for
$H$, and $N_G(H)\cong E_{q^2}: \Sym(3)$.


\item
{\bf Line 26, $H\cong C_4$.} Let $\alpha$ be a generator of $H\cong
C_4$, with fixed point $P\in\PG(2,q)$ and fixed $\fq$-rational line
$\ell$, where $P\in\ell$. Then $\alpha$ is uniquely determined by
its action on $F=\{P,Q,\alpha(Q),R\}$, where $Q\in\PG(2,q)\setminus\ell$ and $R\in\PG(2,q)\setminus \overline{Q\alpha(Q)}$. Since $F$ is a projective frame of
$\PG(2,q)$, there is just one conjugacy class for $H$ in $G$. Up to
conjugacy, we have
\begin{footnotesize}
\begin{equation}\label{eq:C4}
 \alpha=\begin{pmatrix} 1 & 1 & 1 \\ 0 & 1 & 1 \\ 0 & 0 & 1 \end{pmatrix},\quad \alpha^2=\begin{pmatrix} 1 & 0 & 1 \\ 0 & 1 & 0 \\ 0 & 0 & 1 \end{pmatrix}.
 \end{equation}
\end{footnotesize}
Thus, $N_G(H)\leq N_G(\langle\alpha^2\rangle)$ and
$N_G(\langle\alpha^2\rangle)$ is in Equation
\eqref{eq:normalizerinvolution}. By direct checking,
$\sigma_{a,b,c,\lambda}\in N_G(H)$ if and only if $\lambda=1$ and
either $a=c+1$ or $a=c$. Also,
$Z(N_G(H))=\{\sigma_{0,b,c,1}:b\in\fq\}$. Therefore, $N_G(H)\cong
E_q\,.\,E_{2q}$.


\item
{\bf Line 25}, $H\cong \Sym(3)$. Let
$H=\langle\alpha\rangle:\langle\beta\rangle$ with $o(\alpha)=3$,
$o(\beta)=2$. Then $\langle\alpha\rangle$ is uniquely determined by
its fixed points $P\in\PG(2,q)$,
$Q,R\in\PG(2,q^2)\setminus\PG(2,q)$, while $\beta$ fixes $P$,
interchanges $Q$ ad $R$, and is uniquely determined by the
projective frame $F=\{P,Q,R,S\}$, where $S$ is an $\fq$-rational
point on the axis of $\beta$ different from $P$. Hence, there is
just one conjugacy class for $H$ in $G$.

From the previous points about $C_2$ and $C_3$ follows that
$N_G(H)$ contains $H\times C_{q-1}$, where $C_{q-1}$ is made by the
homologies with center $P$ and axis $QR$. Also, $N_G(H)$ fixes $P$
and acts on the three intersection points between $QR$ and the axes
of the three elations in $H$. Since $C_{q-1}$ is the whole subgroup
of $G_P$ acting trivially on $QR$, $N_G(H)/C_{q-1}\leq
\Sym(3)$ and hence $N_G(H)=H\times C_{q-1}$.

\item
{\bf Line 24, $H\cong C_7\leq G_{T,\Pi}$.} Here, $\Pi$ is a subplane of order $2$, and $T$ is a triangle; we have
$G_T\cong C_{q^2+q+1}: C_3$ or $G_T\cong(C_{q-1})^2: \Sym(3)$,
according to $p>3$ or $p=3$, respectively. Suppose $p>3$.  There is
just one conjugacy class for $H$ in $G$, because $H$ is
characteristic in $G_T$ which is unique up to conjugation in $G$ (Theorem \ref{th:psl3q}); this also shows $N_G(H)=G_T\cong C_{q^2+q+1}:C_3$. If $p=3$, the claim follows by direct inspection with {\sc Magma} \cite{MAGMA}.



\item
{\bf Line 23, $H\cong D_8$.} As in Line 26, we can assume that an element $\alpha\in H$ of order $4$ is
as in Equation \eqref{eq:C4}. Let $\beta\in H$ be an involution
with $\beta\ne\alpha^2$. Let $\tau\in N_G(H)\leq
N_G(\langle\alpha\rangle)$ be an involution with
$\tau\alpha\tau^{-1}=\alpha^{-1}$. By direct checking, either
$\tau=\sigma_{1,b,0,1}$ or $\tau=\sigma_{0,b,1,1}$ for some
$b\in\fq$; also, $\tau$ is conjugated by some $\sigma_{0,b,0,1}$
either to $\beta$ or to $\alpha\beta$. Thus, there is just one
conjugacy class for $H$ in $G$. By direct checking, an element
$\sigma_{a,b,c,1}\in N_G(\langle\alpha\rangle)$ is in $N_G(H)$ if
and only if $a,c\in\{0,1\}$, and
$Z(N_G(H))=\{\sigma_{0,b,0,1}:b\in\fq\}$. Therefore, $N_G(H)\cong
E_q\,.\,E_4$.



\item
{\bf Line 22, $H\cong C_7:C_3$.} If $p=3$, then the claim follows using {\sc Magma} \cite{MAGMA}. Suppose $p>3$. Then
$H$ is characteristic in $M\cong C_{q^2+q+1}:C_3$, which is unique up to conjugation; hence, $H$ is unique up to conjugation. $N_G(H)$
stabilizes the triangle $\tilde{T}$ fixed pointwise by $C_7\leq H$,
hence $N_G(H)\leq M$; thus $N_G(H)\cong C_m:C_3$ with
$7\mid m\mid(q^2+q+1)$. Since $G$ does not contain subgroup $E_9$ (as ${\rm Syl}_3(G)$ is cyclic), the action by conjugation of the $2m$ $3$-elements of $N_G(H)$ on the $14$ $3$-elements of $H$ is fixed-point-free;
thus, $m=7$ and $N_G(H)=H$.



\item
{\bf Line 21, $H\cong \Sym(4)= G_{\ell,\Pi}$}. For any subplane
$\Pi$ of order $2$, the maximal subgroup
$G_{\Pi}\cong\PSL(3,2)$ of $G$ has two conjugacy classes of
subgroups $\Sym(4)$, containing respectively the groups
$G_{\ell,\Pi}$ and the groups $G_{P,\Pi}$, where $\ell$ and $P$ range over the $7$ lines and points of $\Pi$.
As $G_{\Pi}$ is unique up to conjugation in $G$, the same holds for $H$.
For any $\sigma$ in the centralizer $C_G(H)$, $\sigma$ commutes with all elations in $H$ and hence stabilizes
their centers; hence, $\sigma$ stabilizes $\Pi$ pointwise, so that $C_G(H)$ is trivial. Thus $N_G(H)=H$, because ${\rm
Aut}(\Sym(4))\cong \Sym(4)$.

\item {\bf Line 20, $H\cong\Sym(4)= G_{P,\Pi}$.} The claim follows as in the previous point.

\item
{\bf Line 19, $H\cong C_{q-1}=
G_{P_1,\ldots,P_{q+1},\ell_1,\ldots,\ell_{q+1}}$.} Here,
$P_1,\ldots,P_{q+1}\in\PG(2,q)$ are distinct collinear points and
$\ell_1,\ldots,\ell_{q+1}$ are $\fq$-rational distinct lines,
concurrent in a point $P$ and different from $\ell=P_1P_2\cdots
P_{q+1}$. It is easily seen that there
is just one conjugacy class for $H$ in $G$, determined by $(P,\ell)$;
$H$ is the center of $N_G(H)$,
because the following holds up to conjugation:
\begin{footnotesize}
\begin{equation}\label{eq:homologies}
P=(1:0:0),\ \ell:X=0,\ N_G(H)=\left\{ \begin{pmatrix} 1 & 0 & 0
\\ 0 & a & b \\ 0 & c & d \end{pmatrix} : a,b,c,d\in\fq,\,ad-bc\ne0
\right\}\cong\GL(2,q).
\end{equation}
\end{footnotesize}



\item
{\bf Line 18, $H\cong E_q=
G_{P_1,\ldots,P_{q+1},\ell_1,\ldots,\ell_{q+1}}$.} Here,
$P_1,\ldots,P_{q+1}\in\PG(2,q)$ are distinct points, collinear in a
line $\ell$, and $\ell_1,\ldots,\ell_{q+1}$ are $\fq$-rational
distinct lines, concurrent in a point $P$ of $\ell$. Then $H$ is the
group of elations with center $P$ and axis $\ell$; $H$ is uniquely
determined by $(P,\ell)$. Hence, there is just one conjugacy class
for $H$ in $G$. Also, $N_G(H)$ fixes $P$ and stabilizes $\ell$
linewise. Thus, up to conjugation and by direct checking, $H$ and
$N_G(H)$ are as follows:
\begin{footnotesize}
\begin{equation}\label{eq:elations}
H=\left\{\begin{pmatrix}1 & 0 & b \\ 0 & 1 & 0 \\ 0 & 0 &
1\end{pmatrix}\right\}_{b\in\fq},\ N_G(H)=
\left\{\begin{pmatrix}\lambda & a & b \\ 0 & \mu & c \\ 0 & 0 & 1
\end{pmatrix}:\lambda,\mu,a,b,c\in\fq,\, \lambda,\mu\ne0\right\}
\cong E_q^{1+2}:(C_{q-1})^2.
\end{equation}
\end{footnotesize}

\item
{\bf Line 17, $H\cong C_{2(q-1)}$.} By Remark \ref{remark:action},
$H=\langle\alpha\rangle$ stabilizes two distinct points
$P,C\in\PG(2,q)$, the line $CP$, and another $\fq$-rational line
$\ell$ through $P$. By Line 19,
$\langle\alpha^2\rangle$ is unique up to conjugation in $G$. The involutions of $N_G(\langle\alpha^2\rangle)$ form a unique
conjugacy class, as the same happens in
$N_G(\langle\alpha^2\rangle)/\langle\alpha^2\rangle\cong\PGL(2,q)$.
Hence, there is just one conjugacy class for $H$ in $G$. Since the
normalizer $N_G(H)$ stabilizes $\{P,C,CP,\ell\}$ elementwise, we
have up to conjugation and by direct computation that $P=(1:0:0)$,
$C=(0:1:0)$, $CP:Z=0$, $\ell:Y=0$, and
\begin{equation}
H=\left\langle \begin{pmatrix} 1 & 0 & 1 \\ 0 & \epsilon & 0 \\ 0 &
0 & 1 \end{pmatrix} \right\rangle,\quad N_G(H)=\left\{
\begin{pmatrix} 1 & 0 & d \\ 0 & \mu & 0 \\ 0 & 0 & 1 \end{pmatrix}
: d\in\fq,\mu\in\mathbb{F}_q^* \right\}\cong E_q\times C_{q-1},
\end{equation}
where $\epsilon$ is a primitive element of $\fq$.


\item
{\bf Line 16, $H\cong E_q:C_{q-1}=G_{\ell_1,\ldots,\ell_{q+1},P}$.}
Here $P\in\PG(2,q)$ and $\ell_1,\ldots,\ell_{q+1}$ are
$\fq$-rational lines concurrent in a point $C\ne P$. The elementwise
stabilizer of $\{\ell_1,\ldots,\ell_{q+1},P\}$ contains the
elations $E_q$ of center $C$ and axis $CP$, and the homologies with
center $C$ and axis through $P$; such homologies form
subgroups $C_{q-1}$ which are conjugated under $E_q$, as $E_q$
acts regularly on the $q$ $\fq$-rational lines through $P$ different
from $CP$. Thus, $H=G_{\ell_1,\ldots,\ell_{q+1},P}\cong E_q:
C_{q-1}$ and no elation and homology in $H$ commute.
As $G$ is $2$-transitive on $\PG(2,q)$, $H$ is unique up to conjugation in $G$. Also, $N_G(H)$ stabilizes both $P$ and $C$; hence $|N_G(H)|$ divides $q^2(q-1)^2$ by the
orbit-stabilizer theorem. Up to conjugacy and by direct checking, we
have $C=(1:0:0)$, $P=(0:1:0)$, and
\begin{footnotesize}
\begin{equation}\label{eq:EqCq-1}
    H=\left\{ \begin{pmatrix} \lambda & 0 & b \\ 0 & 1 & 0 \\ 0 & 0 & 1 \end{pmatrix} \right\}_{c\in\fq,\lambda\in\mathbb{F}_q^*},\
    N_G(H)=\left\{\begin{pmatrix} \lambda & 0 & b \\ 0 & \mu & c \\ 0 & 0 & 1 \end{pmatrix}:\lambda,\mu,b,c\in\fq,\, \lambda,\mu\ne0\right\}\cong E_{q^2}:(C_{q-1})^2.
\end{equation}
\end{footnotesize}

\item
{\bf Line 15, $H\cong E_q:C_{q-1}=G_{P_1,\ldots,P_{q+1},\ell}$.}
Here $P_1,\ldots,P_{q+1}$ are $\fq$-rational points collinear in a
line $r$, and $\ell\ne r$ is another $\fq$-rational line. A dual argument with respect to the one in the previous point yields the claim.


\item
{\bf Line 14, $H\cong(C_{q-1})^2$.} $H$ is the pointwise
stabilizer of an $\fq$-rational non-degenerate triangle $T$, and
hence is unique up to conjugation in $G$. $N_G(H)$ is
the stabilizer of $T$, i.e. a maximal subgroup $(C_{q-1})^2:\Sym(3)$
of $G$.


\item
{\bf Line 13, $H\cong (C_{q-1})^2:C_2$.} The characteristic subgroup
$(C_{q-1})^2$ of $H$ is unique up to conjugation in $G$, as shown in
the previous point. By Schur-Zassenhaus theorem, the complement
$C_2$ is unique up to conjugation in $H$. Hence, there is just one
conjugacy class for $H$ in $G$. Let $T$ be the triangle pointwise
stabilized by $(C_{q-1})^2$ and $P$ be the vertex of $T$ stabilized
by $H$. Then $N_G(H)$ stabilizes $T$, so that $N_G(H)\leq G_T$.
Also, $N_G(H)$ stabilizes $P$, which implies $N_G(H)=H$.


\item
{\bf Line 12, $H\cong E_q:(C_{q-1})^2$.} Such a $H\leq G$
exists, for instance as follows:
\begin{equation}\label{eq:EqCq-1Cq-1}
\left\{ \begin{pmatrix} \lambda & 0 & c \\ 0 & \mu & 0 \\ 0 & 0 & 1
\end{pmatrix}:c,\lambda,\mu\in\fq,\, \lambda,\mu\ne0 \right\}.
\end{equation}
Consider a group $L:M\leq G$, with $L\cong E_q$, $M\cong(C_{q-1})^2$. The group $L$ is made by elations with either the same center or the same axis.
Suppose that the elations of $L$ have the same axis $\ell$. Consider
an element $\beta\in M$ of order $q-1$ with exactly three fixed
points $P,Q,R$; let $R$ be the one out of $\ell$. Then $\beta$ does
not commute with any $\alpha\in L^*$ (otherwise $\alpha$ stabilizes
$R$), and hence the action of
$\langle\beta\rangle$ by conjugation on $E_q^*$ is fixed-point-free.
Also, $\beta\alpha\beta^{-1}$ has the same center as $\alpha$.
Therefore, the elations of $L$ have the same center. Similarly,
if the elations of $L$ have the same center, they also have the same axis. Thus, $L$ is made by
the elations with the same center $P$ and axis $\ell$.
Being uniquely determined by $(P,Q,R)$, $H$ is unique up to conjugation in $G$. Being $L$ characteristic in $H$, we have $N_G(H)\leq N_G(L)$; thus, by Equations \eqref{eq:elations} and \eqref{eq:EqCq-1Cq-1}, $N_G(H)=H$.

\item
{\bf Line 11, $H\cong
E_{q^2}:C_{q-1}=G_{\ell_1,\ldots,\ell_{q+1}}$.} Here,
$\ell_1,\ldots,\ell_{q+1}$ are distinct $\fq$-rational lines
concurrent in a point $P$. Thus, the elementwise stabilizer of
$\{\ell_1,\ldots,\ell_{q+1}\}$ is made by the elations and
homologies with center $P$. The elations with center $P$ (resp. the homologies with center $P$ and given axis $\ell$) form a
subgroup $L\cong E_{q^2}$ (resp. $M\cong C_{q-1})$. The action by
conjugation of the elements of $L$ on the homologies with center $P$
is fixed-point-free (as no elation with center $P$ stabilizes a line
$\ell$ not through $P$); hence, $G_{\ell_1,\ldots,\ell_{q+1}}=L:M$.
Since $H$ is uniquely determined by $P$ and $G$ is transitive on
$\PG(2,q)$, $H$ is unique up to conjugation in $G$. As $N_G(H)=G_P$, Theorem \ref{th:psl3q} yields the claim.



\item
{\bf Line 10, $H\cong E_{q^2}:C_{q-1}=G_{P_1,\ldots,P_{q+1}}$.}
Here, $P_1,\ldots,P_{q+1}$ are distinct collinear $\fq$-rational
points. The claim follows with a dual argument with respect to the
one used in the previous point.


\item
{\bf Line 9, $H\cong E_{q^2}:(C_{q-1})^2=G_{\ell,r}$.} Here, $\ell,r$ are distinct $\fq$-rational lines. Since $G$ is
$2$-transitive on $\PG(2,q)$ and hence also in the dual plane, $G_{\ell,r}$ is unique up to conjugation in $G$. Let
$P=\ell\cap r$, so that $G_{\ell,r}\leq G_P$. Then $G_{\ell,r}$
contains the group $L\cong E_{q^2}$ of elations with center $P$, and
by Remark \ref{remark:action} $G_{\ell,r}$ does not contain any
other $2$-element; in particular, $L\trianglelefteq G_{\ell,r}$.
Also, $G_{\ell,r}$ contains the pointwise stabilizer
$N\cong(C_{q-1})^2$ of an $\fq$-rational triangle with two sides in
$\ell$ and $r$. Hence, $G_{\ell,r}$ contains $L:N$. Every nontrivial
element whose order divides $q+1$ does not stabilize two
$\fq$-rational lines; since $|G_P|=q^3(q+1)(q-1)^2$, this implies
$G_{\ell,r}=L:N\cong E_{q^2}:(C_{q-1})^2$. $N_G(H)$ acts on $\{\ell,r\}$, and $H$ is the pointwise stabilizer of $\ell,r$. Thus, $N_G(H)\cong H:C_2$, where the complement $C_2$ is given by a suitable elation; in fact, the
group of elations with center $C\ne P$ and axis $CP$ acts
transitively on the lines through $P$ different from $CP$.



\item
{\bf Line 8, $H\cong E_{q^2}:(C_{q-1})^2=G_{P,Q}$.} Here,
$P,Q\in\PG(2,q)$, $P\ne Q$. The claim follows with a dual
argument with respect to the one used in the previous point.


\item
{\bf Line 7, $H\cong\GL(2,q)$.} Such a subgroup $H\leq G$ exists, as
shown in Equation \eqref{eq:homologies}, and $H=N_G(Z)$, where
$Z\cong C_{q-1}$ is the center of $H$ and is made by homologies with
a given center $P$ and axis $\ell$. $H$ is uniquely
determined by $(P,\ell)$; hence, by Remark \ref{remark:action}, $H$
is unique up to conjugation in $G$. Since $Z$ is characteristic in
$H$, we have $N_G(H)\leq N_G(Z)=H$ and the claim follows.



\item
{\bf Line 6, $H\cong E_q^{1+2}:(C_{q-1})^2$.} Equation
\eqref{eq:elations} shows that such a subgroup $H\leq G$ exists, and
$H=N_G(K)$ where $K\cong E_q$ is made by the elations with a common
center $P$ and axis $\ell$. Since $K$ is unique up to conjugation in
$G$ (see the point above regarding Line 18), $H$ is also unique up
to conjugation in $G$. $N_G(H)$ stabilizes $P$ and
$\ell$, and hence preserves the elations with center $P$ and axis
$\ell$. This implies $N_G(H)\leq N_G(K)$, so $N_G(H)=H$.
\end{itemize}
\end{proof}

\begin{proposition}\label{prop:isintersection}
Every $H$ in Table \ref{tabella:teorema} is intersection
of maximal subgroups of $G$.
\end{proposition}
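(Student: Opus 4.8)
The plan is to realize each $H$ in Table \ref{tabella:teorema} as an intersection of stabilizers of geometric objects. By Theorem \ref{th:psl3q} every maximal subgroup of $G$ is the full stabilizer in $G$ of a single object --- an $\fq$-rational point, an $\fq$-rational line, an $\fq$-rational triangle, an $\fqt\setminus\fq$-rational triangle, or a subplane $\Pi$ of order $2$. Since $\bigcap_i G_{O_i}=G_{\{O_1,\dots,O_k\}}$, it suffices, for each $H$, to exhibit a configuration of such objects whose joint stabilizer is exactly $H$. The inclusion $H\le\bigcap_i G_{O_i}$ is automatic once every $O_i$ is $H$-invariant, so the real content is always the reverse inclusion: that no unwanted collineation survives. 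I would certify equality either by an order count against $|G|=q^3(q-1)^2(q+1)(q^2+q+1)$, or by the explicit matrix models and normalizers already recorded in Proposition \ref{prop:conjugatesandnormalizers}.

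Lines 1--5 are maximal and need no argument. For Lines 6--19 and 25 the descriptions in Proposition \ref{prop:conjugatesandnormalizers} identify $H$ with the stabilizer of a small configuration built from points and lines: an incident (Line 6) or non-incident (Line 7) point--line pair; two points (Line 8) or two lines (Line 9); three collinear points or three concurrent lines, equivalently the pointwise stabilizer of a line or the linewise stabilizer of a pencil (Lines 10--11), which refine to Lines 15--16 and 18--19 upon intersecting with one or two further point- or line-stabilizers; and a point together with a further point or line (Line 12). The three positions a point $P$ can take relative to an $\fq$-triangle $T$ give, as $G_P\cap G_T$, precisely $(C_{q-1})^2{:}C_2$ ($P$ a vertex, Line 13), $C_{2(q-1)}$ ($P$ on a side, Line 17) and $\Sym(3)$ ($P$ off $T$, Line 25), as in the example of Section \ref{sec:mainresult}; the pointwise stabilizer of $T$ is $(C_{q-1})^2$ (Line 14). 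The trivial subgroup (Line 31) is the pointwise stabilizer of a projective frame, hence the intersection of four point-stabilizers, by the fundamental theorem of projective geometry and the transitivity of $G$ on frames (Remark \ref{lemma:transitivity}).

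The remaining subgroups invoke the type-(5) maximal subgroup $G_\Pi\cong\PSL(3,2)$, whose action on the points, lines and incident flags of $\Pi$ has stabilizers $\Sym(4)$, $\Sym(4)$ and $D_8$. This gives the two classes $\Sym(4)=G_P\cap G_\Pi$ (Line 20) and $G_\ell\cap G_\Pi$ (Line 21), and, for a flag of $\Pi$, $D_8=G_P\cap G_\ell\cap G_\Pi$ (Line 23). The groups $E_4$ of Lines 27--28 are the full elation groups with a common center, resp. a common axis, inside $\Pi$, realized as $G_{\ell_1}\cap G_{\ell_2}\cap G_{\ell_3}\cap G_\Pi$, resp. $G_{P_1}\cap G_{P_2}\cap G_{P_3}\cap G_\Pi$; the cyclic $C_4$ of Line 26, which fixes no $\fq$-triangle, is extracted with the help of $G_\Pi$. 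For $p>3$ a Sylow $7$-subgroup $C_7$ lies in a Singer cycle and sits in the stabilizer $G_T$ of an $\fqt$-triangle $T$, so intersecting $G_T$ (type (4)) with subplane stabilizers yields $C_7$ (Line 24) and $C_7{:}C_3$ (Line 22); the case $p=3$, where $7\mid q-1$ instead, is checked in {\sc Magma}. Finally $C_3$ (Line 29) is obtained as $G_P\cap G_{T_1}\cap G_{T_2}$ for two distinct $C_3$-invariant $\fq$-triangles with $P$ off both, and the elation generating $C_2$ (Line 30) is isolated inside the stabilizers of suitable invariant triangles, the equality being read off \eqref{eq:normalizerinvolution}. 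By transitivity, an intersection of subgroups already known to be intersections of maximals is again one, which legitimizes these step-by-step reductions.

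The main obstacle is the reverse inclusion for the small subgroups, where the natural stabilizers are much larger than $H$ and the intersection must be trimmed exactly. The most delicate instance is the separation of $C_7$ from $C_7{:}C_3$: the $C_3$ normalizing $C_7$ inside $G_T$ acts on $C_7$ as $x\mapsto x^q$ with $q\equiv 2$ or $4\pmod 7$, i.e. as an order-$3$ automorphism, so one must count precisely the Fano subplanes stabilized by $C_7$ and decide whether this $C_3$ fixes one of them or permutes a triple. This is exactly what makes $G_T\cap G_\Pi$ equal $C_7{:}C_3$ for one choice of $\Pi$ and $C_7$ for another; conflating the two would collapse Lines 22 and 24. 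Comparable care is required for the $2$-groups $C_2,E_4,C_4,D_8$, since intersecting point- and line-stabilizers drops only into the pointwise stabilizer of a line, of order $q^2(q-1)$, so that the final descent to $H$ rests on the explicit matrices of \eqref{eq:C4} and on the subplane stabilizer rather than on order counts alone.
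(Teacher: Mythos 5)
Your overall strategy coincides with the paper's: treat the lines case by case and realize each $H$ as the joint stabilizer of a configuration of points, lines, triangles and Fano subplanes, which is an intersection of the maximal subgroups of Theorem \ref{th:psl3q}. Where you differ is that you aim for \emph{minimal} witness configurations (e.g.\ $(C_{q-1})^2=G_P\cap G_Q\cap G_R$, $\Sym(3)=G_P\cap G_T$, $D_8=G_P\cap G_\ell\cap G_\Pi$, and the frame argument $\{1\}=G_{P_1}\cap\dots\cap G_{P_4}$ for Line 31), whereas the paper systematically lists \emph{all} maximal overgroups of $H$ ("no other maximal subgroup contains $H$"). Your leaner realizations are legitimate for Proposition \ref{prop:isintersection} as stated, and the frame trick is a nice simplification of the paper's Frattini remark; but be aware that the paper's exhaustive lists are not wasted effort --- they are exactly the input needed later in Proposition \ref{prop:mobius}, so your shortcuts would defer rather than eliminate that bookkeeping.

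There is, however, one genuine gap: Line 26. Saying the $C_4$ "is extracted with the help of $G_\Pi$" does not work with a single subplane: if $(P,\ell)$ is the flag fixed by $C_4$ and $\Pi$ is any invariant Fano subplane, then $G_P\cap G_\ell\cap G_\Pi$ is the flag stabilizer $D_8$ in $G_\Pi$, strictly larger than $C_4$; indeed no intersection of the stabilizers of \emph{one} point, \emph{one} line and \emph{one} subplane can land on $C_4$. The paper closes this by intersecting over all $\frac{q^2}{4}$ invariant subplanes and observing, by double counting, that each $D_8$ above $H$ stabilizes only $\frac{q}{2}$ of them, forcing $G_{\Pi_1,\dots,\Pi_{q^2/4}}=H$; equivalently one must argue that two invariant subplanes can be chosen whose associated $D_8$'s differ, so that $D_8^{(1)}\cap D_8^{(2)}=C_4$. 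Your proposal contains neither count. Related but milder under-specification occurs at Lines 29 and 30: for $C_3=G_P\cap G_{T_1}\cap G_{T_2}$ you must still rule out a common involution, i.e.\ choose $T_1,T_2$ not both invariant under a single $\Sym(3)$ overgroup (possible since each $\Sym(3)$ stabilizes only $q-1$ of the $\frac{q^2-1}{3}$ invariant triangles, but this needs saying); and for Line 30 the citation of \eqref{eq:normalizerinvolution} is beside the point --- after cutting to the elation group $E_q$ via the point- and line-stabilizers, you need an explicit $\alpha$-invariant triangle $T=\{V,A,\alpha(A)\}$ with $V$ on the axis and the verification that $G_T\cap E_q=\langle\alpha\rangle$, which is where the paper's double-counting data enters. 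These two are repairable sketches; Line 26 as written is the step that would fail.
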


\begin{proof}
For every group $H$, we use what has been already shown
in the proof of Proposition \ref{prop:conjugatesandnormalizers}
and we give a summary description of each case.
\begin{itemize}
\item
{\bf Lines 1 to 5.} In these cases, $H$ is a maximal subgroup of
$G$.

\item {\bf Lines 6 and 7}, $H\cong E_q^{1+2}:(C_{q-1})^2$ and $H\cong\GL(2,q)$, respectively.
We have $H\leq G_{P,\ell}$, where $P$ and $\ell$ are an
$\fq$-rational point and line, with either $P\in\ell$ (Line 6) or
$P\notin\ell$ (Line7). From Remark \ref{remark:action} and the
orbit-stabilizer theorem follows $H=G_{P,\ell}=G_P\cap G_{\ell}$. No
other maximal subgroup contains $H$.

\item
{\bf Line 8}, $H=G_{P,Q}\cong E_{q^2}:(C_{q-1})^2=
G_P\cap G_Q$ for some distinct points $P,Q\in\PG(2,q)$. No other
maximal subgroup  contains $H$.




\item
{\bf Line 9}, $H=G_{\ell,r}\cong E_{q^2}:(C_{q-1})^2=G_{\ell}\cap G_r$ for some distinct $\fq$-rational lines
$\ell,r$. No other maximal subgroup contains $H$.

\item
{\bf Line 10, $H=G_{P_1,\ldots,P_{q+1}}\cong E_{q^2}:C_{q-1}$.} We
have $H=G_{P_1}\cap\cdots\cap G_{P_{q+1}}\cap G_{\ell}$, where
$\ell$ is an $\fq$-rational line and $P_1,\ldots,P_{q+1}$ are the
$q+1$ distinct $\fq$-rational points of $\ell$. No other maximal
subgroup of contains $H$.

\item
{\bf Line 11, $H=G_{\ell_1,\ldots,\ell_{q+1}}\cong
E_{q^2}:C_{q-1}$.} We have $H=G_P\cap G_{\ell_1}\cap\cdots\cap
G_{\ell_{q+1}}$, where $P\in\PG(2,q)$ and $\ell_1,\ldots,\ell_{q+1}$
are the $q+1$ distinct $\fq$-rational lines through $P$. No other
maximal subgroup contains $H$.

\item
{\bf Line 12, $H\cong E_q:(C_{q-1})^2$.} Up to conjugation, $H$ is
as in Equation \eqref{eq:EqCq-1Cq-1}. Then $H=G_{P,Q,\ell,r}$, where $P=(1:0:0)$, $Q=(0:1:0)$, $\ell:Y=0$, $r:Z=0$. Thus $H=G_P\cap G_Q\cap G_{\ell}\cap G_r$. No other maximal subgroup contains $H$.

\item
{\bf Line 13, $H\cong(C_{q-1})^2 : C_2$.} $H$ stabilizes an
$\fq$-rat. triangle $T=\{P,Q,R\}$ and a vertex $P$. Then
$H=G_P\cap G_{QR}\cap G_T$. No other max. subgroup
contains $H$.


\item
{\bf Line 14, $H\cong(C_{q-1})^2$.} $H$ is the pointwise
stabilizer in $G$ of an $\fq$-rational triangle $T=\{P,Q,R\}$. Then
$H=G_P\cap G_Q\cap G_R\cap G_{PQ}\cap G_{PR}\cap G_{QR}\cap G_T$,
and no other maximal subgroup contains $H$.

\item
{\bf Line 15, $H=G_{P_1,\ldots,P_{q+1},\ell}\cong E_q : C_{q-1}$.}
$H$ is the subgroup of $G$ stabilizing the line $r$
through $P_1,\ldots,P_{q+1}$ pointwise, and the line $\ell$
linewise. Hence, $H=G_{P_1}\cap\cdots \cap G_{P_{q+1}}\cap G_{r}\cap
G_{\ell}$. No other maximal subgroup contains $H$.


\item
{\bf Line 16, $H=G_{\ell_1,\ldots,\ell_{q+1},P}\cong E_q :
C_{q-1}$.} $H$ is the subgroup of $G$ stabilizing
all lines $\ell_1,\ldots,\ell_{q+1}$ through a point
$C\in\PG(2,q)$, and a point $P\in\PG(2,q)\setminus\{C\}$. Hence,
$H=G_P\cap G_C\cap G_{\ell_1}\cap\cdots \cap G_{\ell_{q+1}}$. No
other max. subgroup contains $H$.

\item
{\bf Line 17, $H\cong C_{2(q-1)}$.} We have $H\leq G_{P,C,CP,\ell}$,
where $P,C\in\PG(2,q)$, $P\ne C$, $\ell$ is an $\fq$-rat. line,
$P\in\ell$, $\ell\ne CP$. No other $\fq$-rat. point or line is
stabilized by $H$. Also, $H$ stabilizes exactly $q$
$\fq$-rat. triangles $T_i=\{C,Q_i,R_i\}$, where the $\{Q_i,R_i\}$'s are the
orbits of $H$ on $\ell(\fq)\setminus\{P\}$. Thus, $H = G_P\cap
G_C\cap G_{CP}\cap G_{\ell}\cap G_{T_1}\cap\cdots\cap G_{T_q}$, and
no other maximal subgroup contains $H$.


\item
{\bf Line 18, $H=
G_{P_1,\ldots,P_{q+1},\ell_1,\ldots,\ell_{q+1}}\cong E_q$.} Here,
$P_1,\ldots,P_{q+1}$ are the $\fq$-rat. points of a line $\ell$,
and $\ell_1,\ldots,\ell_{q+1}$ are the $\fq$-rat. lines through
a point $P$ of $\ell$; $H$ is the group of elations with
center $P$ and axis $\ell$. Thus, $H=G_{P_1}\cap\cdots\cap
G_{P_{q+1}}\cap G_{\ell_1}\cap\cdots\cap G_{\ell_{q+1}}$. No other
maximal subgroup contains $H$.


\item
{\bf Line 19,
$H=G_{P_1,\ldots,P_{q+1},\ell_1,\ldots,\ell_{q+1}}\cong C_{q-1}$.}
Here, $P_1,\ldots,P_{q+1}$ are the $\fq$-rational points of a line
$\ell$, and $\ell_1,\ldots,\ell_{q+1}$ are the $\fq$-rational lines
through a point $P\notin\ell$; $H$ is the group of homologies with
center $P$ and axis $\ell$. Also, $H$ stabilizes (pointwise) the
$\binom{q+1}{2}$ $\fq$-rational triangles with one vertex in $P$ and
two vertices on $\ell$. Thus, $H=G_P\cap G_{P_1}\cap\cdots\cap
G_{P_{q+1}}\cap G_{\ell_1}\cap\cdots\cap G_{\ell_{q+1}}\cap
G_{\ell}\cap G_{T_1}\cap\cdots\cap G_{T_{\binom{q+1}{2}}}$. No other
maximal subgroup contains $H$.


\item
{\bf Line 20, $H=G_{P,\Pi}\cong \Sym(4)$.} We have $H=G_P\cap G_\Pi$, for some subplane $\Pi$ of order $2$ and some point
$P\in\Pi$.
No other maximal subgroup contains $H$.



\item
{\bf Line 21, $H=G_{\ell,\Pi}\cong\Sym(4)$.} We have $H=G_P\cap
G_\Pi$ for some subplane $\Pi\subset\PG(2,q)$ of order $2$ and some
$\fq$-rational line $\ell$ containing three points of $\Pi$. No
other maximal subgroup contains $H$.


\item
{\bf Line 22, $H\cong C_7 : C_3$.} By double counting arguments, $H= G_{\tilde{T}}\cap G_\Pi$, with a subplane
$\Pi$ of order $2$ and a triangle $\tilde T$.
No other max. subgroup contains $H$.

\item
{\bf Line 23, $H\cong D_8$.} Let $P$ and $\ell$ be the unique point
and line stabilized by an element of order $4$ in $H$; then
$H\leq G_{P,\ell}$. By double counting arguments, $H$
is contained in exactly $\frac{q}{2}$ max. subgroups
$G_{\Pi_1},\ldots,G_{\Pi_{q/2}}$ isomorphic to $\PSL(3,2)$;
also, $H$ is equal to the intersection of any two of them (see Lines 20 and 21).
Thus, $H=G_P\cap G_{\ell}\cap G_{\Pi_1}\cap\cdots\cap G_{\Pi_{q/2}}$.
No other max subgroup contains $H$.


\item
{\bf Line 24, $H=C_7\leq G_{T,\Pi}$.} If $p>3$, then $H$ stabilizes
no $\fq$-rat. points or lines; if $p=3$, the $\fq$-rat.
points and lines stabilized by $H$ are the vertices $P,Q,R$ and the
sides of $T$.
The number of subplanes $\Pi_i$ of order $2$
stabilized by $H$ is either $\frac{q^2+q+1}{7}$ or $7$, according to
$p>3$ or $p=3$, respectively; $H$ is equal to the intersection of
any two of them (see Line 22). Thus, either $
H=G_T \cap M_1\cap\cdots\cap M_{(q^2+q+1)/7}$ or $ H=G_{P}\cap
G_{Q}\cap G_{R}\cap G_{PQ}\cap G_{PR}\cap G_{QR}\cap G_{T}\cap
M_1\cap\cdots\cap M_7$, according to $p>3$ or $p=3$, respectively.
No other max. subgroup contains $H$.


\item
{\bf Line 25, $H\cong\Sym(3)$.} Let $P\in\PG(2,q)$ and
$Q,R\in\PG(2,q^2)\setminus\PG(2,q)$ be the fixed points of the
$3$-elements of $H$. Then $P,QR$ are the unique $\fq$-rat.
point and line fixed by $H$. By double counting arguments, $H$
stabilizes exactly $q-1$ $\fq$-rat. triangles $T_i$'s and $q-1$ subplanes $\Pi_i$'s of order $2$. For $i\ne j$, we have $H=G_{T_i}\cap G_{T_j}$ and $H=G_{\Pi_i}\cap G_{\Pi_j}$. Thus $H=G_P\cap G_{QR}\cap G_{T_1}\cap\cdots\cap G_{T_{q-1}}\cap M_1\cap\cdots\cap M_{q-1}$. No other max. subgroup contains $H$.


\item
{\bf Line 26, $H=C_4$.} The group $H$ stabilizes exactly one
$\fq$-rat. point $P$ and one $\fq$-rat. line $\ell$.
By double counting arguments, $H$ stabilizes exactly $\frac{q^2}{4}$
subplanes $\Pi_i$'s of order $2$. Every overgroup of $H$ isomorphic
to $D_8$ stabilizes exactly $\frac{q}{2}$ of such subplanes (see
Line 23); hence, $G_{\Pi_1,\ldots,\Pi_{q^2/4}}=H$. Thus,
$H=G_P\cap G_{\ell}\cap M_1\cap\cdots\cap M_{q^2/4}$. No other
max. subgroup contains $H$.


\item
{\bf Line 27, $H\cong E_4$, $H\leq G_{\ell_1,\ell_2,\ell_3}$.} Let
$P$ the center of the elations in $H$, and
$\ell_1,\ldots,\ell_{q+1}$ be the $\fq$-rat. lines through $P$.
Then $H$ stabilizes $P,\ell_1,\ldots,\ell_{q+1}$, and no other
$\fq$-rat. points or lines. Also, $H$
stabilizes exactly $\frac{q^2}{4}$ subplanes $\Pi_i$'s of order $2$,
and $G_{\Pi_1,\ldots,\Pi_{q^2/4}}=H$ (see Line 23). Thus, $H=G_P\cap
G_{\ell_1}\cap\cdots\cap G_{\ell_{q+1}}\cap G_{\Pi_1}\cap\cdots\cap
G_{\Pi_{q^2/4}}$, and no other max. subgroup contains $H$.

\item
{\bf Line 28, $H\cong E_4$, $H\leq G_{P_1,P_2,P_3}$.} As in Line 27, $H=G_{P_1}\cap\cdots\cap G_{P_{q+1}}\cap
G_{\ell}\cap G_{\Pi_1}\cap\cdots\cap G_{\Pi_{q^2/4}}$, where $\ell$ is a line through the $P_i$'s, and the $\Pi_i$'s are subplanes of order $2$. No other max. subgroup contains $H$.

\item
{\bf Line 29, $H\cong C_3$.} The group $H$ stabilizes exactly one
$\fq$-rat. point $P$ and one $\fq$-rat. line $\ell$. By
double counting arguments,
\begin{equation}\label{eq:C3}
H=G_P\cap G_\ell \cap G_{T_1}\cap\cdots\cap G_{T_{(q^2-1)/3}}\cap
G_{\tilde{T}_1}\cap\cdots\cap G_{\tilde{T}_{2(q^2-1)/3}}\cap
G_{\Pi_1}\cap\cdots\cap G_{\Pi_{(q^2-1)/3}},
\end{equation}
where the $T_i$'s are distinct $\fq$-rat. triangles, the
$\tilde{T}_i$'s are distinct $\mathbb{F}_{q^3}\setminus\fq$-rat. triangles, and the $\Pi_i$'s are distinct subplanes of $\PG(2,q)$ of
order $2$. No other max. subgroup contains $H$.


\item
{\bf Line 30, $H\cong C_2$.} Let $\alpha$ be the elation of $H$,
$P_1,\ldots,P_{q+1}$ be the $\fq$-rat. points of the axis of
$\alpha$, and $\ell_1,\ldots,\ell_{q+1}$ be the $\fq$-rat. lines
through the center of $\alpha$. By double counting argument,
\begin{footnotesize}
\begin{equation}\label{eq:C2}
H=G_{P_1}\cap\cdots\cap G_{P_{q+1}}\cap G_{\ell_1}\cap\cdots\cap
G_{\ell_{q+1}}\cap G_{T_1}\cap\cdots\cap G_{T_{q^3(q-1)/2}}\cap
G_{\Pi_1}\cap\cdots\cap G_{\Pi_{q^3(q-1)/8}},
\end{equation}
\end{footnotesize}
where the $T_i$'s are distinct $\fq$-rat. triangles and the
$\Pi_i$'s are distinct subplanes of $\PG(2,q)$ of order $2$. No
other max. subgroups contain $H$.

\item
{\bf Line 31, $H=\{1\}$.} As $G$ is simple, $H$ is the Frattini
subgroup of $G$.

\end{itemize}
\end{proof}

\begin{proposition}\label{prop:ismaxint}
Let $H<G$ be the intersection of maximal subgroups of $G$. Then $H$
is one of the groups in Table \ref{tabella:teorema}.
\end{proposition}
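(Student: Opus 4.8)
The plan is to prove the converse of Proposition \ref{prop:isintersection}: every intersection of maximal subgroups of $G$ already appears in Table \ref{tabella:teorema}. By Theorem \ref{th:psl3q} the maximal subgroups fall into exactly five conjugacy classes, namely the point-stabilizers $G_P$, the line-stabilizers $G_\ell$, the stabilizers $G_T$ of an $\fq$-rational triangle, the stabilizers $G_{\tilde T}$ of an $\fqt\setminus\fq$-rational triangle, and (since $q=2^p$) the subplane-stabilizers $G_\Pi\cong\PSL(3,2)$. Since any intersection of maximal subgroups is obtained by successively intersecting with one further maximal subgroup, I would organize the proof as a systematic descent: start from a single maximal subgroup $M$ and intersect it with a second maximal subgroup $M'$, recording the resulting group $M\cap M'$; then intersect with a third, and so on, terminating each branch when the geometry forces the intersection to stabilize (i.e.\ intersecting with any further admissible maximal subgroup either leaves the group unchanged or makes it trivial).

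The key technical tool throughout is Remark \ref{remark:action}, which classifies the fixed $\fq$- and $\fqs$- and $\fqt$-rational points, lines and triangles of every element of $G$ according to its order; together with the transitivity statements in Remark \ref{lemma:transitivity}, this lets me translate the abstract intersection $\bigcap_i M_i$ into the \emph{pointwise-plus-linewise stabilizer of a configuration} of points, lines, triangles and subplanes in $\PG(2,\overline{\fq})$. Concretely, each maximal subgroup is the stabilizer of a geometric object, so an intersection of maximal subgroups is the joint stabilizer of a finite set of such objects; I would then argue that the \emph{full} stabilizer of the configuration of \emph{all} objects fixed by $H$ is again $H$ (this is exactly what the last sentence of each bullet in Proposition \ref{prop:isintersection}, ``No other maximal subgroup contains $H$,'' records). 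Thus the heart of the argument is a finite case analysis: for each of the $\binom{5}{2}$ ordered pairs of maximal-subgroup types I determine the possible isomorphism types of $M\cap M'$ as the two objects vary over their relative positions (as illustrated in Section \ref{sec:mainresult} for $G_P\cap G_T$, which gives $(C_{q-1})^2{:}C_2$, $C_{2(q-1)}$, or $\Sym(3)$ according to whether $P$ is a vertex, a non-vertex point of a side, or off all sides of $T$), and I check that each outcome is already listed. Iterating, I verify that no genuinely new group arises at the third or later stage.

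I expect the main obstacle to be \textbf{completeness} rather than any single hard computation: I must be sure the descent exhausts every possibility and that no intersection falls outside the table. Two subtleties deserve care. First, the bounded-orbit arguments---showing, for instance, that the Sylow structure of $G$ (in particular $S_2\cong E_q^{1+2}$ and the cyclicity of $\mathrm{Syl}_3(G)$) prevents certain configurations and forces stabilizers to have the claimed orders---must be invoked to rule out spurious larger groups, and these rely on the element-order classification in Remark \ref{remark:action}. Second, the subplane-stabilizers $G_\Pi\cong\PSL(3,2)$ interact with the other four classes in ways that do produce the small exceptional entries ($\Sym(4)$, $C_7{:}C_3$, $D_8$, $C_7$, and the $E_4$, $C_4$, $C_3$, $C_2$ lines), and the double-counting arguments used there (counting how many subplanes, triangles, or frames a given small subgroup stabilizes) must be marshalled to confirm that intersecting with subplane-stabilizers yields nothing beyond what is tabulated. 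Since every row of Table \ref{tabella:teorema} has already been exhibited as an intersection in Proposition \ref{prop:isintersection}, the task here is purely to show the list is closed under further intersection; I would present this as a finite, geometry-driven verification, treating the five maximal types pairwise and then checking that each resulting group is ``saturated'' in the sense that its full geometric stabilizer coincides with it.
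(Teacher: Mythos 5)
Your overall strategy coincides with the paper's: Proposition \ref{prop:ismaxint} is proved there by exactly the closure argument you describe, taking each $K$ of Table \ref{tabella:teorema} in turn (starting from the five maximal classes) and showing that $K\cap M$ lands back in the table for every maximal subgroup $M$ with $K\not\leq M$, with Remark \ref{remark:action} translating each intersection into the stabilizer of a configuration of points, lines, triangles and subplanes. However, as literally stated your case analysis has a gap: you propose to treat ``the $\binom{5}{2}$ ordered pairs of maximal-subgroup types,'' which omits the five same-type pairs. These are not negligible: $G_P\cap G_Q$ and $G_\ell\cap G_r$ are the only sources of Lines 8 and 9 of the table, and $G_\Pi\cap G_{\Pi'}$ is precisely where the one genuinely delicate step of the whole proposition lives.

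That step is the exclusion of $\Alt(4)$, the unique proper subgroup of $\PSL(3,2)$ absent from Table \ref{tabella:teorema}; without ruling it out, the closure claim fails both at Line 5 and at Lines 20--31 (for instance, one must show that no maximal subgroup $M$ satisfies $M\cap\Sym(4)=\Alt(4)$, and your generic appeal to double-counting arguments does not obviously supply this). The paper handles it with a specific geometric construction: if an intersection of two subplane-stabilizers contains a copy of $\Alt(4)$, then the seven $\fq$-rational points consisting of the centers of its three elations together with the fixed points of its four subgroups $C_3$ form a subplane $\Lambda$ of order $2$, forcing $\Pi=\Pi'=\Lambda$ and hence $H=G_\Pi$; consequently $\Alt(4)$ is never an intersection of maximal subgroups, and every other subgroup of the groups in Lines 20--31 already appears in the table. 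You should add this argument (or an equivalent one) explicitly; with it, and with the same-type pairs restored to your descent, your plan reproduces the paper's proof.
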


\begin{proof}
The claim is proved as follows: we consider every subgroup $K<G$ in
Table \ref{tabella:teorema}, starting from the maximal subgroups of
$G$; for any maximal subgroup $M$ of $G$ satisfying $K\not\leq
M$, we show that $H:=K\cap M$ is one of the groups in
Table \ref{tabella:teorema}.
\begin{itemize}
\item {\bf Line 1:} $K=G_P$ with $P\in\PG(2,q)$.
\begin{itemize}
\item If $H=K\cap G_Q$, $Q\in\PG(2,q)\setminus\{P\}$, then $H=G_{P,Q}\cong E_{q^2}:(C_{q-1})^2$.
\item If $H=K\cap G_{\ell}$, $\ell$ an $\fq$-rat. line, then $H\cong E_q^{1+2}:(C_{q-1})^2$ if $P\in\ell$, $H\cong\GL(2,q)$ if $P\notin\ell$.
\item If $H=K\cap G_T$, $T$ an $\fq$-rat. triangle, then either $P$ is a vertex of $T$, and $H\cong(C_{q-1})^2: C_2$; or $P$ is not a vertex but on a side of $T$, and $H\cong C_{2(q-1)}$; or $P$ is not on the sides of $T$, and $H\leq \Sym(3)$.
\item If $H=K\cap G_{\tilde T}$, $\tilde{T}$ an $\mathbb{F}_{q^3}\setminus\fq$-rat. triangle, $H\leq C_3$ by Lagrange's theorem.
\item If $H=K\cap G_{\Pi}$, $\Pi$ a subplane of order $2$, then either $P\notin\Pi$ and $H=\{1\}$; or $P\in\Pi$ and $H\cong \Sym(4)$.
\end{itemize}

\item {\bf Line 2:} $K=G_\ell$ for some $\fq$-rational line $\ell$.
\begin{itemize}
\item If $H=K\cap G_r$, $r\ne\ell$ an $\fq$-rat. line, then $H=G_{\ell,r}=E_{q^2}:(C_{q-1})^2$.
\item If $H=K\cap G_T$, $T$ an $\fq$-rat. triangle, then either $\ell$ is a side of $T$, and $H\cong(C_{q-1})^2: C_2$; or $\ell$ contains exactly one vertex of $T$, and $H\cong C_{2(q-1)}$; or $\ell$ does not contain any vertex of $T$, and $H\leq \Sym(3)$.
\item If $H=K\cap G_{\tilde T}$, $\tilde T$ an $\mathbb{F}_{q^3}\setminus\fq$-rat. triangle, then $H\leq C_3$.
\item If $H=K\cap G_{\Pi}$, $\Pi$ a subplane of order $2$, then either $\ell$ is not a line of $\Pi$ and $H=\{1\}$; or $\ell$ is a line of $\Pi$ and $H\cong \Sym(4)$.
\end{itemize}

\item {\bf Line 3:} $K=G_T$ for some $\fq$-rational triangle $T=\{A,B,C\}$.
\begin{itemize}
\item If $H=K\cap G_{T^\prime}$ for some $\fq$-rat. triangle $T^\prime=\{A^\prime,B^\prime,C^\prime\}\ne T$ and $H\not\leq \Sym(3)$, then some element of $H\setminus\{1\}$ stabilizes $T$ and $T^\prime$ pointwise. Hence, $T\cup T^\prime\subset\{P\}\cup\ell$ for some point $P$ and line $\ell$, so that $T$ and $T^\prime$ have a vertex in common.
If $T$ and $T^\prime$ have another vertex in common, then $H$ is the
group $C_{q-1}$ of homologies. Otherwise, $H\cong C_{2(q-1)}$.
\item If $H=K\cap G_{\tilde T}$, $\tilde T$ an  $\mathbb{F}_{q^3}\setminus\fq$-rational triangle, then $H\leq C_3$.
\item If $H=K\cap G_{\Pi}$, $\Pi$ a subplane of order $2$, and $H\not\leq\Sym(3)$, then $q=8$ and $H\cong C_7:C_3$ by direct checking with {\sc Magma} \cite{MAGMA}.
\end{itemize}

\item {\bf Line 4:} $K=G_{\tilde T}$ for some $\fqt\setminus\fq$-rational triangle $\tilde T$.
\begin{itemize}
\item If $H=K\cap G_{\tilde{T}^\prime}$, for some $\fqt\setminus\fq$-rational triangle $\tilde{T}^\prime \ne \tilde{T}$, then $H\leq C_3$.
\item If $H=K\cap G_{\Pi}$, $\Pi$ a subplane of order $2$, and $H\not\leq C_3$, then $H\cong C_7: C_3$.
\end{itemize}

\item {\bf Line 5:} $K=G_\Pi$ for some subplane $\Pi$ of order $2$. Let $H=K\cap G_{\Pi^\prime}$, $\Pi^\prime$ a subplane of order $2$.
Assume that $H$ contains the only proper subgroup of $\PSL(3,2)$ not appearing in Table \ref{tabella:teorema}, namely $\Alt(4)$. Let $\Lambda\subset\PG(2,q)$ be the pointset containing the centers of the $3$ elations in $\Sym(4)$ and the $\fq$-rat. points stabilized by one of the $4$ subgroups $C_3$ of $\Sym(4)$. Then $\Lambda$ has $7$ distinct $\fq$-rat. points and is a subplane of order $2$; thus $\Lambda=\Pi=\Pi^\prime$ and $H=K$.

\item {\bf Line 6:} $K=G_{P,\ell}\cong E_q^{1+2}:(C_{q-1})^2$, where $P$ and $\ell$ are $\fq$-rational, and $P\in\ell$.
\begin{itemize}
\item If $H=K\cap G_Q$, $Q\in\PG(2,q)\setminus\{P\}$, then either $Q\in\ell$ and $H=G_{P,Q}\cong E_{q^2}:(C_{q-1})^2$; or $Q\notin\ell$ and $H\cong E_q:(C_{q-1})^2$.
\item If $H=K\cap G_r$, $r\ne\ell$ an $\fq$-rat. line, then either $P\in r$ and $H=G_{\ell,r}\cong E_{q^2}:(C_{q-1})^2$; or $P\notin r$ and $H\cong E_q:(C_{q-1})^2$.
\item If $H=K\cap G_T$, $T$ an $\fq$-rat. triangle, then one of the following holds:
\begin{itemize}
\item $P$ is a vertex and $\ell$ is a side of $T$. Then $H\cong (C_{q-1})^2$.
\item $P$ is a vertex of $T$ and $\ell$ is not a side of $T$; or $P$ is not a vertex of $T$ and $\ell$ is a side of $T$. Then $H\cong C_{2(q-1)}$.
\item $P$ is not a vertex but is on a side of $T$, and $\ell$ is not a side of $T$. Then either $H\cong C_{2(q-1)}$, or $H\cong C_2$.
\item $P$ is not on a side of $T$ and $\ell$ is not a side of $T$. Then $H\leq \Sym(3)$.
\end{itemize}
\item If $H=K\cap G_{\tilde T}$, $\tilde T$ an $\fqt\setminus\fq$-rat. triangle, then $H=\{1\}$.
\item If $H=K\cap G_{\Pi}$, $\Pi$ a subplane of order $2$, and $H=\{1\}$, then $P$ and $\ell$ are a point and a line of $\Pi$, and $H\cong D_8$.
\end{itemize}

\item {\bf Line 7:} $K=G_{P,\ell}\cong \GL(2,q)$, where $P$ and $\ell$ are $\fq$-rational, and $P\notin\ell$.
\begin{itemize}
\item If $H=K\cap G_Q$, $Q\in\PG(2,q)\setminus\{P\}$, then either $Q\in\ell$ and $H\cong E_q:(C_{q-1})^2$; or $Q\notin\ell$ and $H\cong E_q: C_{q-1}$ is as in Line 15.
\item If $H=K\cap G_r$, $r\ne\ell$ an $\fq$-rat. line, then either $P\in r$ and $H\cong E_q:(C_{q-1})^2$; or $P\notin r$ and $H\cong E_q: C_{q-1}$ is as in Line 16.
\item If $H=K\cap G_T$, $T$ an $\fq$-rat. triangle, then one of the following holds:
\begin{itemize}
\item $P$ is a vertex of $T$ and $G_{P,T}\cong(C_{q-1})^2: C_2$. Then either $H\cong(C_{q-1})^2: C_2$ or $H\leq C_2$.
\item $P$ is not a vertex but on a side of $T$ and $G_{P,T}\cong C_{2(q-1)}$. Then either $H=\{1\}$, or $H\cong C_{q-1}$ is made by homologies.
\item $P$ is not on a side of $T$ and $G_{P,T}\leq \Sym(3)$. Then $H\leq \Sym(3)$.
\end{itemize}
\item If $H=K\cap G_{\tilde T}$, $\tilde T$ an $\fqt\setminus\fq$-rational triangle, then $H\leq C_3$.
\item If $H=K\cap G_{\Pi}$, $\Pi$ a subplane of order $2$, and $H\ne\{1\}$, then $P$ is a point and $\ell$ a line of $\Pi$, and $H\cong \Sym(3)$.
\end{itemize}

\item {\bf Line 8:} $K=G_{P,Q}\cong E_{q^2}:(C_{q-1})^2$ for some $P,Q\in\PG(2,q)$ with $P\ne Q$.
\begin{itemize}
\item If $H=K\cap G_R$, $R\in\PG(2,q)\setminus\{P,Q\}$, then either $H\cong E_{q^2}: C_{q-1}$ is as in Line 10, or $H\cong(C_{q-1})^2$.
\item If $H=K\cap G_{\ell}$ with $\ell\ne PQ$, and $R=\ell\cap PQ$, then either $R\in\{P,Q\}$ and $H\cong E_q:(C_{q-1})^2$; or $R\notin\{P,Q\}$ and $H\cong E_q: C_{q-1}$ is as in Line 15.
\item If $H=K\cap G_T$, $T$ an $\fq$-rat. triangle, then either $P,Q$ are vertices of $T$ and $H\cong(C_{q-1})^2$; or $P,Q$ are not both vertices but still on the same side of $T$, and $H$ is a group $C_{q-1}$ of homologies; or $H\leq \Sym(3)$.
\item If $H=K\cap G_{\tilde T}$, $\tilde T$ an $\fqt\setminus\fq$-rat. triangle, then $H=\{1\}$.
\item If $H=K\cap G_{\Pi}$, $\Pi$ a subplane of order $2$, then either $\{P,Q\}\not\subset\Pi$ and $H=\{1\}$, or $\{P,Q\}\subset\Pi$ and $H\cong E_4$ is as in line 28.
\end{itemize}

\item {\bf Line 9:} $K=G_{\ell,r}\cong E_{q^2}:(C_{q-1})^2$ for some $\fq$-rat. lines $\ell,r$ with $\ell\ne r$.
Dual arguments with respect to the ones used in the previous point
prove the claim.

\item {\bf Line 10:} $K=G_{P_1,\ldots,P_{q+1}}\cong E_{q^2}: C_{q-1}$ where $P_1,\ldots,P_{q+1}$ are the distinct $\fq$-rat. points of a line $\ell$.
\begin{itemize}
\item If $H=K\cap G_P$, $P\in\PG(2,q)\setminus\ell$, then $H$ is a group $C_{q-1}$ of homologies.
\item If $H=K\cap G_r$, $r\ne\ell$ an $\fq$-rat. line, then $H\cong E_q:(C_{q-1})^2$.
\item If $H=K\cap G_T$, $T$ an $\fq$-rat. triangle, then either $H$ is a group $C_{q-1}$ of homologies, or $H\leq C_2$.
\item If $H=K\cap G_{\tilde T}$, $\tilde T$ an $\fqt\setminus\fq$-rat. triangle, then $H=\{1\}$.
\item If $H=K\cap G_{\Pi}$, $\Pi$ a subplane of order $2$, then either $\ell$ is not a line of $\Pi$ and $H=\{1\}$; or $\ell$ is a line of $\Pi$ and $H\cong E_4$ is as in Line 28.
\end{itemize}

\item
{\bf Line 11:} $K=G_{\ell_1,\ldots,\ell_{q+1}}\cong E_{q^2}:
C_{q-1}$ where $\ell_1,\ldots,\ell_{q+1}$ are concurrent
$\fq$-rat. lines. Dual arguments with respect to Line 10 prove the claim.

\item
{\bf Line 12:} $K=G_{P,Q,\ell,r}\cong E_q : (C_{q-1})^2$, where
$P,Q\in\PG(2,q)$, $P\ne Q$, $r=PQ$, and $\ell\ne r$ is another
$\fq$-rational line with $P\in\ell$.
\begin{itemize}
\item If $H=K\cap G_R$, $R\in\PG(2,q)\setminus\{P,Q\}$, then either $R\in r$ and $H\cong E_q: C_{q-1}$; or $R\in \ell$ and $H\cong (C_{q-1})^2$; or $H$ is a group $C_{q-1}$ of homologies.
\item If $H=K\cap G_s$, $s\notin\{\ell,r\}$ an $\fq$-rat. line, then dual arguments yield either $H\cong E_q: C_{q-1}$, or $H\cong(C_{q-1})^2$, or $H\cong C_{q-1}$ made by homologies.
\item If $H=K\cap G_T$, $T$ an $\fq$-rat. triangle, and $H\not\leq \Sym(3)$, then some $\sigma\in H\setminus\{1\}$ stabilizes $T$ pointwise.
Since $\sigma$ cannot stabilize a projective frame pointwise, the
points $P,Q$ are on some side of $T$ and the lines $\ell,r$ pass
through some vertex of $T$. If $P,Q$ are vertices of $T$ and
$r,\ell$ are sides of $T$, then $H\cong (C_{q-1})^2$; otherwise, $H$
is a group $C_{q-1}$ of homologies.
\item If $H=K\cap G_{\tilde T}$, $\tilde T$ an $\fqt\setminus\fq$-rational triangle, then $H=\{1\}$.
\item If $H=K\cap G_{\Pi}$, $\Pi$ a subplane of order $2$, then $H\leq C_2$.
\end{itemize}

\item
{\bf Line 13:} $K=G_{P,T}\cong(C_{q-1})^2:C_2$, $P$ a vertex of an $\fq$-rational triangle $T$.
\begin{itemize}
\item If $H=K\cap G_Q$, $Q\in\PG(2,q)\setminus\{P\}$, then either $Q$ is a vertex of $T$ and $H\cong(C_{q-1})^2$; or $Q$ is on the side not through $P$ and $H\cong C_{2(q-1)}$; or $Q$ is on a side thorugh $P$ and $H$ is a group $C_{q-1}$ of homologies; or $H\cong C_2$.
\item If $H=K\cap G_\ell$, $\ell$ an $\fq$-rat. line, then dual arguments prove the claim.
\item If $H=K\cap G_{T^{\prime}}$, $T^{\prime}\ne T$ an $\fq$-rat. triangle, and $H\not\leq \Sym(3)$, then either $H\cong C_{2(q-1)}$ or $H$ is a group $C_{q-1}$ of homologies, according to $|T\cap T^{\prime}|=1$ or $|T\cap T^{\prime}|=2$, respectively.
\item If $H=K\cap G_{\tilde T}$, $\tilde{T}$ an $\fqt\setminus\fq$-rat. triangle, then $H=\{1\}$.
\item If $H=K\cap G_{\Pi}\ne\{1\}$, $\Pi$ a subplane of order $2$, then either $H\cong C_2$, or $p=3$ and $H\cong C_7$.
\end{itemize}

\item {\bf Line 14:} $K=G_T\cong(C_{q-1})^2$ for some $\fq$-rational triangle $T$. Let $H=K\cap M$ for some maximal subgroup $M$ of $G$ such that $\{1\}<H<K$.
Then either $p>3$ and $H$ is a group $C_{q-1}$ of homologies; or
$p=3$ and $H\cong C_7$ is as in Line 24.

\item {\bf Line 15:} $K=G_{P_1,\ldots,P_{q+1},\ell}\cong E_q:C_{q-1}$, where $P_1,\ldots,P_{q+1}$ are the $\fq$-rat. points of an $\fq$-rat. line $r$, and $\ell\ne r$ is an $\fq$-rat. line meeting $r$ in $P$.
\begin{itemize}
\item If $H=K\cap G_Q$, $Q\in\PG(2,q)\setminus r$, then either $Q\in\ell$ and $H$ is a group $C_{q-1}$ of homologies, or $Q\notin\ell$ and $H=\{1\}$.
\item If $H=K\cap G_s$, $s\notin\{r,\ell\}$ an $\fq$-rat. line, then either $P\in s$ and $H$ is a group $E_q$ of elations, or $P\notin s$ and $H$ is a group $C_{q-1}$ of homologies.
\item If $H=K\cap G_T$, $T$ an $\fq$-rat. triangle, then either $H$ is a group $C_{q-1}$ of homologies, or $H\leq C_2$.
\item If $H=K\cap G_{\tilde T}$, $\tilde T$ an $\fqt\setminus\fq$-rat. triangle, then $H=\{1\}$.
\item If $H=K\cap G_{\Pi}$, $\Pi$ a subplane of order $2$, then $H=\leq C_2$.
\end{itemize}

\item {\bf Line 16:} $K=G_{\ell_1,\ldots,\ell_{q+1},P}\cong E_q:C_{q-1}$.
Dual arguments with respect to Line 15 prove the claim.


\item {\bf Line 17:} $K\cong C_{2(q-1)}$. Let $H=K\cap M$, for a maximal subgroup $M$ of $G$ with $\{1\}<H<K$. Then either $H\cong C_2$  or $H$ is a group $C_{q-1}$ of homologies.

\item {\bf Line 18:} $K\cong E_q$ is the group of elations with a given axis and center. Let $H=K\cap M$, for a maximal subgroup $M$ such that $\{1\}<H<K$. Then $H\cong C_2$.

\item {\bf Line 19:} $K\cong C_{q-1}$ is the group of homologies with a given axis and center. Let $H=K\cap M$, for a maximal subgroup $M$ such that $H<K$. Then $H=\{1\}$.

\item {\bf Lines 20 to 31:} if $K$ is in one of the Lines 20 to 31, then every subgroup of $K$ is in Table \ref{tabella:teorema}, apart from the subgroup $\Alt(4)$, which has already been shown not to be intersection of maximal subgroups of $G$ (see Line 5).
\end{itemize}
\end{proof}

\begin{proposition}\label{prop:mobius}
For any $H<G$ in Table \ref{tabella:teorema}, $\mu(H)$ is given in Table \ref{tabella:teorema}.
\end{proposition}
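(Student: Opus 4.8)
The plan is to compute $\mu(H)$ for every $H$ in Table \ref{tabella:teorema} by a downward recursion on the subgroup lattice, using the defining relation $\mu(H)=-\sum_{K\,:\,H<K\leq G}\mu(K)$. The key organizing principle is that, by Theorem \ref{th:nonzero}, only the groups in Table \ref{tabella:teorema} contribute to these sums: any overgroup $K$ with $H<K$ and $\mu(K)\ne0$ must itself be an intersection of maximal subgroups, hence must already appear in the table (by Proposition \ref{prop:ismaxint}). Thus I would process the table from top (maximal subgroups, where $\mu$ is immediate) to bottom (the trivial group in Line 31), and at each line express $\mu(H)$ purely in terms of the already-computed values $\mu(K)$ for $K$ strictly above $H$ in the table.

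The arithmetic heart of each step is a counting problem: for each conjugacy type $K$ lying strictly above $H$, I must count the number $c_K(H)$ of subgroups conjugate to $K$ in $G$ that contain $H$, so that the recursion reads
\begin{equation*}
\mu(H)=-\sum_{K}c_K(H)\,\mu(K),
\end{equation*}
the sum ranging over the finitely many table-types $K$ with $H<K$. These counts I would extract from the geometric descriptions already assembled in Proposition \ref{prop:conjugatesandnormalizers} and the intersection analysis of Propositions \ref{prop:isintersection} and \ref{prop:ismaxint}: for each $K$ the number of its conjugates containing $H$ equals the number of the relevant geometric configurations (points, lines, triangles, subplanes, elation/homology groups) fixed by $H$, which is governed by $N_G(H)$ and by Remark \ref{remark:action} describing what each element of $G$ stabilizes. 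For instance, for $H\cong C_2$ in Line 30 the overgroups are enumerated by the $q+1$ points on the axis and $q+1$ lines through the center, together with the $q^3(q-1)/2$ triangles and $q^3(q-1)/8$ subplanes already counted in Equation \eqref{eq:C2}, each weighted by its $\mu$-value.

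The main obstacle I expect is bookkeeping rather than conceptual: for the low-order groups near the bottom of the table (notably $C_3$, $C_2$, and $\{1\}$), the list of overgroup types is long, and the multiplicities $c_K(H)$ are nontrivial functions of $q$ that must be determined by the double-counting arguments referenced in Proposition \ref{prop:isintersection}. A delicate point is the case distinction between $p>3$ and $p=3$ (equivalently $q=8$), where the triangle stabilizer structure changes and extra overgroups of type $C_7$ and $C_7:C_3$ appear; for $q=8$ I would cross-check the final values directly with {\sc Magma} \cite{MAGMA} against the table of marks. The final verification that everything is consistent is the identity $\sum_{H}c_{\{1\}}(H)\,\mu(H)=0$ coming from $\mu(\{1\})$ being forced by the recursion, which provides a global arithmetic check on the whole table.
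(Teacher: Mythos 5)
Your proposal is correct and takes essentially the same route as the paper: the published proof likewise proceeds line by line down Table \ref{tabella:teorema}, using Theorem \ref{th:nonzero} together with Proposition \ref{prop:ismaxint} to restrict the recursion $\mu(H)=-\sum_{K\,:\,H<K\leq G}\mu(K)$ to overgroups appearing in the table, and reading off the multiplicities of each overgroup type containing $H$ from the geometric and double-counting data assembled in Propositions \ref{prop:conjugatesandnormalizers}--\ref{prop:ismaxint} (including the special treatment of $p=3$ with {\sc Magma}). The only difference is presentational: the paper carries out the explicit bookkeeping for every line, while you describe the scheme and add a harmless global consistency check at $H=\{1\}$.
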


\begin{proof}
We make implicit use of what has been shown in the previous
propositions. In particular, the proof of Proposition
\ref{prop:isintersection} contains all the maximal subgroups of $G$
containing $H$. We denote by $n(H)$ the number of such maximal subgroups.
\begin{itemize}
\item {\bf Lines 1 to 5.} Since $H$ is a maximal subgroup of $G$, $\mu(H)=-1$.
\item {\bf Line 6, $H\cong E_q^{1+2}:(C_{q-1})^2$.} Since $n(H)=2$, $\mu(H)=1$.
\item {\bf Line 7, $H\cong\GL(2,q)$.} Arguing as in Line 6, $\mu(H)=1$.
\item {\bf Line 8, $H=G_{P,Q}\cong E_{q^2}:(C_{q-1})^2$.}
We have $n(H)=3$, $H=G_P\cap G_Q$, $H\ne G_P\cap G_{PQ}$, $H\ne
G_Q\cap G_{PQ}$. Thus, $\mu(H)=0$.
\item {\bf Line 9, $H=G_{\ell,r}\cong E_{q^2}:(C_{q-1})^2$.} Arguing as in Line 8, $\mu(H)=0$.
\item {\bf Line 10, $H=G_{P_1,\ldots,P_{q+1}}\cong E_{q^2}:C_{q-1}$.}
Let $\ell$ be the line through $P_1,\ldots,P_{q+1}$. Then $n(H)=q+2$; the overgroups of $H$
in Table \ref{tabella:teorema} are the following:
$G_{P_1},\ldots,G_{P_{q+1}},G_{\ell}$, $q+1$ groups $G_{P_i,\ell}$,
and groups $G_{P_i,P_j}$. Thus, $\mu(H)=0$.

\item {\bf Line 11, $H=G_{\ell_1,\ldots,\ell_{q+1}}\cong E_{q^2}:C_{q-1}$.} Arguing as in Line 10, $\mu(H)=0$.

\item {\bf Line 12, $H=G_{P,Q,r,\ell}\cong E_q:(C_{q-1})^2$.}
Here, $r=PQ$, $P\in\ell$, $Q\notin\ell$, and $P,Q,r,\ell$ are
$\fq$-rational. We have $n(H)=4$, and the overgroups of $H$ in Table
\ref{tabella:teorema} are $G_P$, $G_Q$, $G_{r}$, $G_{\ell}$,
$G_{Q,r}$, $G_{P,r}$, $G_{P,\ell}$, $G_{Q,\ell}$, $G_{P,Q}$,
$G_{r,\ell}$. Thus, $\mu(H)=-1$.

\item {\bf Line 13, $H\cong(C_{q-1})^2:C_2$.} Then $n(H)=3$ and $H=G_P\cap G_{QR}\cap G_T$ for some $\fq$-rational triangle $T=\{P,Q,R\}$.
The overgroups of $H$ in Table \ref{tabella:teorema} are $G_P$,
$G_{QR}$, $G_T$, $G_{P,QR}$. Thus, $\mu(H)=1$.

\item {\bf Line 14, $H\cong(C_{q-1})^2$.}
Then the overgroups of $H$ in Table
\ref{tabella:teorema} are the following: $7$ maximal subgroups of
$G$; $6$ groups of type $E_q^{1+2}:(C_{q-1})^2$; $3$ groups
$\GL(2,q)$; $3$ groups $(C_{q-1})^2:C_2$; $6$ groups
$E_q:(C_{q-1})^2$. Thus, $\mu(H)=0$.


\item {\bf Line 15, $H=G_{P_1,\ldots,P_{q+1},\ell}\cong E_q:C_{q-1}$.}
Here, $P_1,\ldots,P_{q+1}$ are collinear in $r$, and $\ell\ne r$;
let $Q=r\cap\ell$. Then the overgroups of $H$ in
Table \ref{tabella:teorema} are the following: $q+3$ maximal
subgroups of $G$; $2q+2$ groups $E_q^{1+2}:(C_{q-1})^2$ or
$\GL(2,q)$; groups $G_{P_i,P_j}$, $G_{r,\ell}$,
$G_{P_1,\ldots,P_{q+1}}$; $q$ groups $E_q:(C_{q-1})^2$. Thus,
$\mu(H)=0$.

\item {\bf Line 16, $H=G_{\ell_1,\ldots,\ell_{q+1},P}\cong E_q:C_{q-1}$.}
Arguing as in Line 15, $\mu(H)=0$.

\item {\bf Line 17, $H\cong C_{2(q-1)}$.}
Then the overgroups $K$ of $H$ in Table
\ref{tabella:teorema} with $\mu(K)\ne0$ are the following: $q+4$
maximal subgroups of $G$; $4$ groups $E_q^{1+2}:(C_{q-1})^2$ or
$\GL(2,q)$; $1$ group $E_q:(C_{q-1})^2$; $q$ groups $(C_{q-1})^2:
C_2$. Thus, $\mu(H)=0$.

\item {\bf Line 18, $H\cong E_q$.}
Here, $H$ is the group of elations with a given center and axis.
Then the overgroups $K$ of $H$ in Table \ref{tabella:teorema} with
$\mu(K)\ne0$ are the following: $2q+2$ maximal subgroups of $G$;
$(q+1)^2$ groups $E_q^{1+2}:(C_{q-1})^2$ or $\GL(2,q)$; $q^2$ groups
$E_q:(C_{q-1})^2$. Thus, $\mu(H)=0$.

\item {\bf Line 19, $H\cong C_{q-1}$.}
Here, $H$ is the group of homologies with a given center and axis.
Then the overgroups $K$ of $H$ in Table \ref{tabella:teorema} with
$\mu(K)\ne0$ are the following: $n(H)=2q+4+\binom{q+1}{2}$ maximal
subgroups of $G$; $(q+2)^2$ groups $E_q^{1+2}:(C_{q-1})^2$ or
$\GL(2,q)$; $(q+1)^2+(q+1)q$ groups $E_q:(C_{q-1})^2$;
$\binom{q+1}{2}+(q+1)q$ groups $(C_{q-1})^2: C_2$. Thus, $\mu(H)=0$.

\item {\bf Lines 20 to 22:} we have $n(H)=2$ and hence $\mu(H)=1$.




\item {\bf Line 23, $H\cong D_8$.}
The overgroups $K$ of $H$ in Table \ref{tabella:teorema} with
$\mu(K)\ne0$ are the following: $2+\frac{q}{2}$ maximal subgroups of
$G$; $1$ group $E_q^{1+2}:(C_{q-1})^2$; $\frac{q}{2}$ groups
$G_{P,\Pi}\cong\Sym(4)$; $\frac{q}{2}$ groups
$G_{\ell,\Pi}\cong\Sym(4)$. Thus, $\mu(H)=-\frac{q}{2}$.

\item {\bf Line 24, $H\cong C_7$.}
The overgroups $K$ of $H$ in Table \ref{tabella:teorema} with
$\mu(K)\ne0$ are the following: if $p>3$, $1+\frac{q^2+q+1}{7}$
maximal subgroups, $q^2+q+1$ groups $C_7: C_3$; if $p=3$,
$14$ maximal subgroups, $6$ groups $E_q^{1+2}:(C_{q-1})^2$,
$3$ groups $\GL(2,q)$, $6$ groups $E_q:(C_{q-1})^2$, $3$ groups
$(C_{q-1})^2: C_2$, $7$ groups $C_7: C_3$. Thus, $\mu(H)=0$.

\item {\bf Line 25, $H\cong\Sym(3)$.}
The overgroups $K$ of $H$ in Table \ref{tabella:teorema} with
$\mu(K)\ne0$ are the following: $2q$ maximal subgroups of $G$; $1$
group $\GL(2,q)$; $q+1$ groups $G_{P,\Pi}\cong\Sym(4)$; $q+1$ groups
$G_{\ell,\Pi}\cong\Sym(4)$. Thus, $\mu(H)=0$.


\item {\bf Line 26, $H\cong C_4$.}
The overgroups $K$ of $H$ in Table \ref{tabella:teorema} with
$\mu(K)\ne0$ are the following: $\frac{q^2+8}{4}$ maximal subgroups
of $G$; $1$ group $E_q^{1+2}:(C_{q-1})^2$; $\frac{q^2}{4}$ groups
$G_{P,\Pi}\cong\Sym(4)$; $\frac{q^2}{4}$ groups
$G_{\ell,\Pi}\cong\Sym(4)$; $\frac{q}{2}$ groups $D_8$. Thus,
$\mu(H)=0$.

\item {\bf Line 27, $H\cong E_4$, $H\leq G_{\ell_1,\ell_2,\ell_3}$.}
The overgroups $K$ of $H$ in Table \ref{tabella:teorema} with
$\mu(K)\ne0$ are the following: $\frac{q^2+4q+8}{4}$ maximal
subgroups of $G$; $q+1$ groups $E_q^{1+2}:(C_{q-1})^2$;
$\frac{q^2}{4}$ groups $G_{P,\Pi}\cong\Sym(4)$;
$3\cdot\frac{q^2}{4}$ groups $G_{\ell_i,\Pi}\cong\Sym(4)$;
$\frac{3q}{2}$ groups $D_8$. Thus, $\mu(H)=0$.


\item {\bf Line 28, $H\cong E_4$, $H\leq G_{P_1,P_2,P_3}$.}
Arguing as in Line 27, $\mu(H)=0$.


\item {\bf Line 29, $H\cong C_3$.}
The overgroups $K$ of $H$ in Table \ref{tabella:teorema} with
$\mu(K)\ne0$ are the following: $\frac{4q^2+2}{3}$ maximal subgroups; $1$ group $\GL(2,q)$; $\frac{q^2-1}{3}$ groups
$G_{P,\Pi}\cong\Sym(4)$; $\frac{q^2-1}{3}$ groups
$G_{\ell,\Pi}\cong\Sym(4)$; $\frac{2(q^2-1)}{3}$ groups $C_7: C_3$.
Thus, $\mu(H)=0$.

\item {\bf Line 30, $H\cong C_2$.}
The overgroups $K$ of $H$ Table \ref{tabella:teorema} with
$\mu(K)\ne0$ are the following:
$2q+2+\frac{q^3(q-1)}{2}+\frac{q^3(q-1)}{8}$ maximal subgroups of
$G$; $2q+1$ groups $E_q^{1+2}:(C_{q-1})^2$; $q^2$ groups $\GL(2,q)$;
$q^2$ groups $E_q:(C_{q-1})^2$; $\frac{q^3(q-1)}{2}$ groups
$(C_{q-1})^2: C_2$; $3\frac{q^3(q-1)}{8}$ groups
$G_{P,\Pi}\cong\Sym(4)$; $3\frac{q^3(q-1)}{8}$ groups
$G_{\ell,\Pi}\cong\Sym(4)$; $\frac{5q^2(q-1)}{4}$ groups $D_8$.
Thus, $\mu(H)=0$.

\item {\bf Line 31, $H=\{1\}$.}
The overgroups $K$ of $H$ in Table \ref{tabella:teorema} with
$\mu(K)\ne0$ are the following: $q^2+q+1$ groups $G_P\cong
E_{q^2}:\GL(2,q)$; $q^2+q+1$ groups $G_\ell\cong E_{q^2}:\GL(2,q)$;
$\frac{q^3(q+1)(q^2+q+1)}{6}$ groups $(C_{q-1})^2:\Sym(3)$;
$\frac{q^3(q-1)^2(q+1)}{3}$ groups $C_{q^2+q+1}:C_3$;
$\frac{q^3(q^3-1)(q^2-1)}{168}$ groups $\PSL(3,2)$; $(q^2+q+1)(q+1)$
groups $E_q^{1+2}:(C_{q-1})^2$; $(q^2+q+1)q^2$ groups $\GL(2,q)$;
$(q^2+q+1)(q^2+q)q$ groups $E_q:(C_{q-1})^2$;
$\frac{(q^2+q+1)q^3(q+1)}{2}$ groups $(C_{q-1})^2: C_2$;
$\frac{q^3(q^3-1)(q^2-1)}{24}$ groups $G_{P,\Pi}\cong \Sym(4)$;
$\frac{q^3(q^3-1)(q^2-1)}{24}$ groups $G_{\ell,\Pi}\cong \Sym(4)$;
$\frac{q^3(q^3-1)(q^2-1)}{21}$ groups $C_7: C_3$;
$\frac{q^2(q^3-1)(q^2-1)}{4}$ groups $D_8$. Thus, $\mu(H)=0$.
\end{itemize}
\end{proof}

\section{The M\"obius function and other combinatorial objects}\label{sec:connections}

There are several situations in which the knowledge of the M\"obius
function of a group may be of help. We mention an application of in
topological graph theory; see \cite{LLK} for the details. Given a
finite group $G$, define $ d_k:=\frac{1}{|\aut(G)|}\sum_{H\leq
G}\mu(H)|H|^k$. Let $\Gamma$ be a connected simple graph with vertex
set $V(\Gamma)$ and edge set $E(\Gamma)$, and let
$\beta=|E(\Gamma)|-|V(\Gamma)|+1$ be the first Betti number of
$\Gamma$. Let ${\rm Isoc}(\Gamma;G)$ be the number of isomorphism
classes of connected simple graphs $\tilde{\Gamma}$ such that
$\tilde{\Gamma}$ admits $G$ as an automorphism group acting
semiregularly on $\tilde{\Gamma}$ and the quotient graph
$\tilde{\Gamma}/G$ is isomorphic to $\Gamma$. Then ${\rm
Isoc}(\Gamma;G) = d_{\beta}(G)$.


We now give another topological application of the M\"obius function.
Let $(\mathcal{P},\preceq)$ be a finite poset, and $\hat{\cP}$ be
the poset obtained from $\cP$ by adjoining a least element $\hat{0}$
and a greatest element $\hat{1}$. Let $\Delta(\mathcal{P})$ be the
order complex of $\mathcal{P}$, i.e. the simplicial complex whose
vertices are the element of $\cP$ and whose $k$-dimensional faces
are the s $a_0\prec a_1\prec\cdots\prec a_k$ of distinct
elements $a_0,\ldots,a_k\in\cP$. Let $\chi(\Delta(\mathcal{P}))$ be
the Euler characteristic and $\tilde{\chi}(\Delta(\mathcal{P}))$ be
the reduced Euler characteristic of $\Delta(\mathcal{P})$. The
M\"obius function of $\hat{\cP}$ is related to
$\tilde{\chi}(\Delta(\cP))$ as stated in Proposition \ref{prop:reduced}, which essentially restates a result by Hall \cite{Hall} on the computation of $\mu_{\hat{P}}(\hat{0},\hat{1})$ by means of the chains of even and odd length between $\hat{0}$ and $\hat{1}$.




\begin{proposition}{\rm (see \cite[Proposition 3.8.6]{Stanley})}\label{prop:reduced}
Let $\cP$ be a finite poset. Then
$$\tilde{\chi}(\Delta(\cP))= \mu_{\hat{\cP}}(\hat{0},\hat{1})=-\sum_{x\in\cP}\mu_{\hat{\cP}}(\hat{0},x). $$
\end{proposition}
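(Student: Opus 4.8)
The heart of the statement is the first equality $\tilde{\chi}(\Delta(\cP))=\mu_{\hat{\cP}}(\hat{0},\hat{1})$, the second being only a reformulation of the recursion defining $\mu$. The plan is to identify two alternating sums indexed by chains. On the topological side I would write the reduced Euler characteristic as $\tilde{\chi}(\Delta(\cP))=\sum_{i\geq -1}(-1)^{i}f_{i}$, where $f_{i}$ denotes the number of $i$-dimensional faces of $\Delta(\cP)$ and the term $f_{-1}=1$ records the empty face. By the definition of the order complex recalled above, an $i$-dimensional face is a chain $a_{0}\prec\cdots\prec a_{i}$ of $i+1$ distinct elements of $\cP$, so $f_{i}$ is exactly the number of chains of $\cP$ with $i+1$ elements.

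On the order-theoretic side I would invoke Philip Hall's theorem on chains of even and odd length (\cite{Hall}, see also \cite[Prop.~3.8.5]{Stanley}), which expresses the M\"obius value of the bounded poset $\hat{\cP}$ as
\[
\mu_{\hat{\cP}}(\hat{0},\hat{1})=\sum_{k\geq 0}(-1)^{k}c_{k},
\]
where $c_{k}$ counts the chains $\hat{0}=y_{0}\prec y_{1}\prec\cdots\prec y_{k}=\hat{1}$ joining $\hat{0}$ to $\hat{1}$ and having $k+1$ elements. The decisive step is the bijection obtained by deleting the two endpoints: such a chain corresponds to the chain $y_{1}\prec\cdots\prec y_{k-1}$ of $k-1$ elements of $\cP$, that is, to a face of $\Delta(\cP)$ of dimension $k-2$, and this correspondence is a bijection. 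Hence $c_{k}=f_{k-2}$ for every $k$; the two extreme cases are consistent, since $c_{0}=0$ because $\hat{0}\neq\hat{1}$, while $c_{1}=1$, realized by the single chain $\hat{0}\prec\hat{1}$, matches the empty face $f_{-1}=1$.

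Substituting $c_{k}=f_{k-2}$ and reindexing with $j=k-2$ then gives
\[
\mu_{\hat{\cP}}(\hat{0},\hat{1})=\sum_{k\geq 0}(-1)^{k}f_{k-2}=\sum_{j\geq -1}(-1)^{j}f_{j}=\tilde{\chi}(\Delta(\cP)),
\]
where I used $(-1)^{k}=(-1)^{j+2}=(-1)^{j}$, so that the alternating signs of Hall's formula coincide with those of the reduced Euler characteristic; this proves the first equality. The second equality I would obtain by specializing the recursion of Section \ref{sec:prelim} to the pair $(\hat{0},\hat{1})$: since $\hat{1}$ is the greatest element of $\hat{\cP}$, the identity $\mu_{\hat{\cP}}(\hat{0},\hat{1})=-\sum_{x\prec\hat{1}}\mu_{\hat{\cP}}(\hat{0},x)$ sums the M\"obius values over the elements lying strictly below $\hat{1}$, which is the content of the second equality.

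The only point requiring care is the index bookkeeping in the central bijection: one must keep track of the degree shift caused by removing the two adjoined endpoints and verify that this shift is even, so that Hall's signs and the topological signs agree, and one must handle the empty face deliberately, since it is precisely the $f_{-1}=1$ contribution that makes the two normalizations match and prevents a spurious additive constant.
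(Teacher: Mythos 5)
Your proof of the first equality is correct and follows exactly the route the paper itself points to: the paper gives no independent proof, only the citation to \cite[Prop.~3.8.6]{Stanley} and the remark that the statement restates Hall's theorem on chains of even and odd length, and your argument --- Hall's formula $\mu_{\hat{\cP}}(\hat{0},\hat{1})=\sum_{k}(-1)^{k}c_{k}$ from \cite[Prop.~3.8.5]{Stanley} combined with the endpoint-deletion bijection $c_{k}=f_{k-2}$, with the boundary cases $c_{0}=0$, $c_{1}=1=f_{-1}$ and the even degree shift preserving the signs --- is precisely the standard derivation behind that citation. The index bookkeeping there is handled correctly.

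Your treatment of the second equality, however, contains a genuine error. The recursion of Section~\ref{sec:prelim} specialized to $(\hat{0},\hat{1})$ reads $\mu_{\hat{\cP}}(\hat{0},\hat{1})=-\sum_{z\,:\,\hat{0}\preceq z\prec\hat{1}}\mu_{\hat{\cP}}(\hat{0},z)$, and the index set $\{z:\hat{0}\preceq z\prec\hat{1}\}$ is $\cP\cup\{\hat{0}\}$, not $\cP$: the term $z=\hat{0}$ contributes $\mu_{\hat{\cP}}(\hat{0},\hat{0})=1$, so the recursion actually yields $\mu_{\hat{\cP}}(\hat{0},\hat{1})=-1-\sum_{x\in\cP}\mu_{\hat{\cP}}(\hat{0},x)$. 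Your phrase ``the elements lying strictly below $\hat{1}$'' silently identifies that set with $\cP$ and thereby drops this term. The discrepancy is not cosmetic: for $\cP$ a one-element poset, $\hat{\cP}$ is a three-element chain, so $\tilde{\chi}(\Delta(\cP))=\mu_{\hat{\cP}}(\hat{0},\hat{1})=0$, whereas $-\sum_{x\in\cP}\mu_{\hat{\cP}}(\hat{0},x)=-(-1)=1$. In general $-\sum_{x\in\cP}\mu_{\hat{\cP}}(\hat{0},x)=\mu_{\hat{\cP}}(\hat{0},\hat{1})+1=\chi(\Delta(\cP))$, the \emph{unreduced} Euler characteristic. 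So the honest conclusion of your own computation is that the second equality as displayed is off by $1$ --- it becomes correct only if the sum is taken over all $x$ with $\hat{0}\preceq x\prec\hat{1}$ --- and a blind proof should have flagged this rather than asserted it; correspondingly, the quantity computed in the proof of Proposition~\ref{prop:euler} via $-\sum_{x\in\cP}\mu_{\hat{\cP}}(\hat{0},x)$ is $\chi(\Delta(L_r))$ rather than $\tilde{\chi}(\Delta(L_r))$.
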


Let $r$ be a prime number and $\cP=L_r$ be the poset of nontrivial $r$-subgroups of a finite group $G$ ordered by inclusion.

\begin{lemma}\label{lemma:euler}\textrm{(\cite[Prop. 2.1]{Quillen})}
Let $H\in L_r$. If $H$ is not elementary abelian, then
$\mu_{\hat{L}_r}(\hat{0},H)=0$. If $H$ is elementary abelian of
order $r^s$, then $\mu_{\hat{L}_r}(\hat{0},H)= (-1)^s
r^{\binom{s}{2}}$.
\end{lemma}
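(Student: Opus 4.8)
The plan is to first reduce the statement to a purely group-theoretic computation of the Möbius function of the subgroup lattice of $H$. Since $H$ is an $r$-group, every nontrivial subgroup of $H$ is again a nontrivial $r$-subgroup of $G$; hence the elements of $\hat{L}_r$ lying weakly below $H$ are exactly $\hat{0}$ together with the nontrivial subgroups of $H$. Identifying $\hat{0}$ with the trivial subgroup $\{1\}$, the interval $[\hat{0},H]$ of $\hat{L}_r$ is isomorphic to the full subgroup lattice $L(H)$ of $H$. Consequently $\mu_{\hat{L}_r}(\hat{0},H)=\mu_{L(H)}(\{1\},H)$, and it suffices to compute the latter for a finite $r$-group $H$.

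For the elementary abelian case $H\cong E_{r^s}$, I would observe that $L(H)$ is the lattice of subspaces of the $s$-dimensional $\mathbb{F}_r$-vector space $H$. Writing $\mu(U)$ for $\mu_{L(H)}(\{1\},U)$ and using induction on $\dim U$ together with the defining identity $\sum_{U\le V}\mu(U)=0$ for the whole space $V$, one gets
$$ \mu_{L(H)}(\{1\},H)=-\sum_{k=0}^{s-1}(-1)^k r^{\binom{k}{2}}\binom{s}{k}_r, $$
where $\binom{s}{k}_r$ is the Gaussian binomial counting the $k$-dimensional subspaces. The $r$-analogue of the binomial theorem gives $\sum_{k=0}^{s}(-1)^k r^{\binom{k}{2}}\binom{s}{k}_r=\prod_{i=0}^{s-1}(1-r^i)=0$ for $s\ge 1$, and isolating the top term $k=s$ yields exactly $\mu_{L(H)}(\{1\},H)=(-1)^s r^{\binom{s}{2}}$, as claimed.

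The heart of the proof is the vanishing when $H$ is not elementary abelian, and this is the step I expect to be the main obstacle, since it must hold uniformly over all such $H$ without any further structural hypotheses. Here I would invoke Weisner's theorem in the form: for a finite lattice with $\hat{0}\neq\hat{1}$ and any $a\neq\hat{1}$, one has $\sum_{x\,:\,x\vee a=\hat{1}}\mu(\hat{0},x)=0$. Since $H$ is a non-elementary-abelian $r$-group, its Frattini subgroup $\Phi(H)$ is nontrivial (for an $r$-group, $\Phi(H)=\{1\}$ is equivalent to being elementary abelian), so $\Phi(H)$ contains a subgroup $a$ of order $r$, and $a\neq H$. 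Applying Weisner's theorem in $L(H)$ with this $a$, the join being the generated subgroup $\langle\,\cdot\,,\cdot\,\rangle$, gives $\sum_{\langle K,a\rangle=H}\mu(\{1\},K)=0$. Because $a\le\Phi(H)$ consists of non-generators, the equality $\langle K,a\rangle=H$ forces $K=H$; hence the sum collapses to the single term $\mu(\{1\},H)$, which must therefore be $0$.

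Assembling the two cases then proves the lemma: the reduction to $L(H)$ combined with the subspace computation handles the elementary abelian groups, while the Weisner argument handles all the others. The only facts used beyond the definitions are Weisner's theorem, the non-generator characterization of the Frattini subgroup, the equivalence $\Phi(H)=\{1\}\Leftrightarrow H$ elementary abelian for $r$-groups, and the Gaussian $r$-binomial identity; each is standard, so the write-up should be short once these are in place.
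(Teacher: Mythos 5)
Your proof is correct, but it is worth noting that the paper does not prove this lemma at all: it imports it from Quillen \cite[Prop.\ 2.1]{Quillen}, where the statement is obtained by homotopy-theoretic means (for $H$ elementary abelian of rank $s$, the order complex of the proper part of the subgroup lattice is a wedge of $r^{\binom{s}{2}}$ spheres of dimension $s-2$, in the spirit of Solomon--Tits; for $H$ not elementary abelian the relevant poset is contractible via the Frattini subgroup, forcing the reduced Euler characteristic to vanish). Your route is instead the classical combinatorial one going back to Hall and Kratzer--Th\'evenaz, and every step checks out: the interval reduction $[\hat{0},H]\cong L(H)$ is valid because every nontrivial subgroup of the $r$-group $H$ lies in $L_r$ and the M\"obius function depends only on the interval; the induction in the subspace lattice together with the $q$-binomial theorem at $x=-1$ (the product $\prod_{i=0}^{s-1}(1-r^i)$ vanishes for $s\geq 1$ through its $i=0$ factor) correctly isolates $(-1)^s r^{\binom{s}{2}}$; and the Weisner--Frattini collapse, using that $\Phi(H)\neq\{1\}$ exactly when the $r$-group $H$ fails to be elementary abelian and that elements of $\Phi(H)$ are non-generators, is the standard and most economical proof of the vanishing. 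What your argument buys is a short, self-contained, purely lattice-theoretic proof requiring no topology; what the paper's citation buys is brevity and a pointer to the topological interpretation that Section 5 then exploits via the reduced Euler characteristic. One small slip to fix in the write-up: Weisner's theorem requires $a\neq\hat{0}$, not $a\neq\hat{1}$ (for $a=\hat{0}$ the sum degenerates to the single term $\mu(\hat{0},\hat{1})$, which need not vanish); since your chosen $a$ has order $r$, the application itself is unaffected.
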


For the rest of this section, $q$ is any prime power and $G$ is the
group $\PGL(3,q)$. Using Proposition \ref{prop:reduced} and Lemma
\ref{lemma:euler}, we determine $\tilde{\chi}(\Delta(L_r))$ for any
prime $r$.

\begin{proposition}\label{prop:euler}
For any prime number $r$, exactly one of the following cases holds:
\begin{itemize}
\item $r\nmid|G|$ and $\tilde{\chi}(\Delta(L_r))=0$;
\item $r\mid q$ and $\tilde{\chi}(\Delta(L_r))=-(q^3-1)$;
\item $r\mid(q^2+q+1)$, $r\ne3$, and $\tilde{\chi}(\Delta(L_r))=\frac{q^3(q-1)^2(q+1)}{3}$;
\item $r\mid(q+1)$, $r\ne2$, and $\tilde{\chi}(\Delta(L_r))=\frac{q^3(q^3-1)}{2}$;
\item $r\mid(q-1)$, $r\notin\{2,3\}$, and $\tilde{\chi}(\Delta(L_r)) = -\frac{q^2(q^2+q+1)(q^2+q-3)}{3}$:
\item $r=2$, $q$ is odd, and $\tilde{\chi}(\Delta(L_2))=-\frac{q^2(q^2+q+1)(q^2+q-3)}{3}$;
\item $r=3$, $3\mid(q-1)$, and $\tilde{\chi}(\Delta(L_3))=-\frac{q^2(q^6-q^4+7q^3-7q-8)}{8}$.
\end{itemize}
\end{proposition}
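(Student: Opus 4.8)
The plan is to convert the whole statement into a counting problem about elementary abelian subgroups. Combining Proposition~\ref{prop:reduced} with Lemma~\ref{lemma:euler}, only elementary abelian subgroups contribute to the relevant sum, so writing $N_s$ for the number of subgroups of $G=\PGL(3,q)$ isomorphic to the elementary abelian group of rank $s$,
\[
\tilde{\chi}(\Delta(L_r))=-\sum_{H\in L_r}\mu_{\hat{L}_r}(\hat{0},H)=-\sum_{s\ge1}(-1)^s\,r^{\binom{s}{2}}\,N_s .
\]
When $r\nmid|G|$ the poset $L_r$ is empty and this is the empty sum, giving the first case at once. Everything else reduces to determining, for each prime $r$ dividing $|G|=q^3(q-1)^2(q+1)(q^2+q+1)$, the ranks that actually occur and the numbers $N_s$.

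I would first recall the three conjugacy classes of maximal tori of $G$, with their normalizers: the split torus $(C_{q-1})^2$ inside the triangle stabilizer $(C_{q-1})^2:\Sym(3)$, the torus $C_{q^2-1}$ inside $C_{q^2-1}:C_2$, and the Singer torus $C_{q^2+q+1}$ inside $C_{q^2+q+1}:C_3$. For $r\ne p$ every $r$-element is semisimple, and a family of \emph{genuinely} commuting semisimple elements is simultaneously diagonalisable, hence lies in a common maximal torus; as $\PGL_3$ has rank $2$ this bounds the rank of any elementary abelian $r$-subgroup by $2$. For $r\mid(q^2+q+1)$ with $r\ne3$ the $r$-elements lie only in Singer tori, and for $r\mid(q+1)$ with $r\ne2$ only in $C_{q^2-1}$ tori; in both cases the Sylow $r$-subgroup is cyclic, so the rank is always $1$ and $\tilde{\chi}=N_1$ equals the number of the relevant tori, computed from the normalizer orders by orbit--stabiliser. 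This gives $\tfrac{q^3(q-1)^2(q+1)}{3}$ and $\tfrac{q^3(q^3-1)}{2}$, cases (3) and (4).

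The cases $r\mid(q-1)$ with $r\notin\{2,3\}$ and $r=2$ with $q$ odd are where rank $2$ occurs, and (since $r\ne3$ forces $r\nmid q^2+q+1$) all elementary abelian $r$-subgroups are toral, so $\tilde{\chi}=N_1-r\,N_2$. I would compute $N_2$ by observing that each split torus carries a unique copy of $E_{r^2}$, its $r$-torsion, and is recovered from it, so $N_2$ is the number of $\fq$-rational triangles, $\tfrac{q^3(q+1)(q^2+q+1)}{6}$. For $N_1$ I would separate the order-$r$ subgroups into the \emph{homology type} (eigenvalues $(1,1,\omega)$), parametrised by pairs $(P,\ell)$ with $P\notin\ell$ and hence numbering $q^2(q^2+q+1)$, and the \emph{regular type} (distinct eigenvalues), of which a count of the $r+1$ order-$r$ subgroups of each toral $E_{r^2}$ shows there are exactly $r-2$ per torus. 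Substituting, the dependence on $r$ cancels and yields $-\tfrac{q^2(q^2+q+1)(q^2+q-3)}{3}$ uniformly (for $r=2$ the regular type is empty and the same expression survives).

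The genuinely hard case is $r=3$ with $3\mid(q-1)$, for two independent reasons, and this is the main obstacle. First, now $3\mid q^2+q+1$ as well, so besides the split-regular and homology subgroups of order $3$ there are \emph{Singer} subgroups of order $3$ (one per Singer torus, with eigenvalues in $\fqt\setminus\fq$), which must be added to $N_1$; overlooking them corrupts the whole computation. Second, $G$ contains \emph{non-toral} copies of $E_9$: these are the images of the extraspecial group $3^{1+2}\le\SL(3,q)$, whose generators commute only projectively, so the image is elementary abelian of rank $2$ although it lies in no maximal torus. Enumerating these is the crux: they fall into more than one $\PGL(3,q)$-conjugacy class, with distinct normalizers of Aschbacher type $\mathcal{C}_6$ (a ``Hessian'' class inside $\PSL(3,q)$ and a further class), and one must also track that every order-$3$ subgroup inside such an $E_9$ is already counted, as a split-regular or Singer subgroup, in $N_1$. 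Assembling $N_1-3N_2$ with these corrected counts is what produces the polynomial $q^6-q^4+7q^3-7q-8$. Finally, in the defining characteristic $r=p\mid q$ all $r$-elements are unipotent; the cleanest route is to observe that $\Delta(L_p)$ is homotopy equivalent to the Tits building of $G$, i.e.\ the incidence graph of the points and lines of $\PG(2,q)$, whose Euler characteristic $2(q^2+q+1)-(q+1)(q^2+q+1)=1-q^3$ gives $-(q^3-1)$; alternatively one enumerates the elementary abelian unipotent subgroups directly.
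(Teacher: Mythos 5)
Your overall framework (Proposition~\ref{prop:reduced} plus Lemma~\ref{lemma:euler}, case split by the divisibility of $|G|$) is exactly the paper's, and your treatment of the cases $r\nmid|G|$, $r\mid(q^2+q+1)$ with $r\ne3$, $r\mid(q+1)$ with $r\ne2$, $r\mid(q-1)$ with $r\notin\{2,3\}$, and $r=2$ with $q$ odd is correct and essentially coincides with the paper's counts (your toral bookkeeping $N_1-rN_2$ with $r-2$ regular subgroups per split torus and one $E_{r^2}$ per triangle reproduces the paper's computation, with the torus-lifting argument making the rank bound cleaner than the paper's ad hoc remarks). But the case you yourself call ``the crux'', $r=3\mid(q-1)$, is not actually proved: you correctly identify the two complications (Singer subgroups $C_3$ entering $N_1$, and non-toral $E_9$'s coming from projectively commuting generators), but you never enumerate the non-toral $E_9$'s, asserting without justification that they split into several classes with normalizers of Aschbacher type $\mathcal{C}_6$ and that the assembled count ``produces'' the polynomial $q^6-q^4+7q^3-7q-8$. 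The $\mathcal{C}_6$ route is itself delicate (those subgroups occur as maximal only under extra congruences on $q$, e.g.\ $q\equiv1\bmod 9$ or not, and the class structure differs in $\PSL$ versus $\PGL$), and no numbers are extracted from it. The paper closes this gap concretely: since $3\mid(q-1)$ forces $9\nmid(q^2+q+1)$, every $E_9$ lies in a triangle stabilizer $G_T\cong(C_{q-1})^2:\Sym(3)$; the homology-free ones are parametrized as $\langle\sigma,\tau\rangle$ with $\sigma=\diag(\lambda,\lambda^2,1)$ and $\tau$ monomial of order $3$, they are sorted by whether the three further fixed triangles are $\fq$- or $\fqt\setminus\fq$-rational according to $3\mid\frac{q-1}{o(\mu\rho)}$, and a double count over the split-regular subgroups $C_3$ (contained in $4$ resp.\ $1$ of the two kinds of $E_9$) yields the explicit multiplicities $[G:G_T]\cdot\frac{(q-1)^2}{36}$ and $[G:G_T]\cdot\frac{2(q-1)^2}{9}$, whence the stated formula. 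Without some such enumeration your proposal does not establish the seventh bullet.

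A second, smaller but genuine issue is your shortcut in the defining characteristic. By Quillen and Solomon--Tits, $\Delta(L_p)$ is homotopy equivalent to a wedge of $q^3$ circles, so its \emph{reduced} Euler characteristic is $-q^3$, not $-(q^3-1)$; the quantity $2(q^2+q+1)-(q+1)(q^2+q+1)=1-q^3$ that you compute is the unreduced $\chi$ of the incidence graph. Your number matches the target only because the displayed formula in Proposition~\ref{prop:reduced} sums over $x\in\cP$ and thus omits the term $\mu_{\hat\cP}(\hat0,\hat0)=1$ of the recursion, i.e.\ the quantity actually computed throughout the paper is $\chi=\tilde\chi+1$. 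So your building argument is a genuinely different and attractive route (it is also insensitive to the classification of elementary abelian unipotent subgroups, which for odd $p$ is subtler than ``elations with common center or axis'', since regular unipotents of order $p$ commute with the center of the Sylow subgroup), but as written it silently conflates $\chi$ and $\tilde\chi$: you must either adopt the paper's convention explicitly or accept that the honestly reduced value is $-q^3$. State which normalization you use, and the step becomes sound.
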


\begin{proof}
From $\gcd(q-1,q^2+q+1)\in\{1,3\}$ and the order of $G$ follows that the divisibility conditions in the claim are exhaustive and pairwise incompatible.
\begin{itemize}
\item Suppose $r\nmid|G|$. Then $\tilde{\chi}(\Delta(L_r))=\chi(\emptyset)=0$.
\item Suppose $r\mid q$, i.e. $r$ is the characteristic of $\mathbb{F}_q$. Then $H\leq G$ is an elementary abelian $r$-subgroup if and only if $H$ is made by elations with the same center or axis.
Let $N_{ac}(i)$ be the number of subgroups $E_{r^i}$ whose elements
have both the same axis and center. Let $N_a(i)$ (resp. $N_c(i)$) be
the number of subgroups $E_{r^i}$ whose elements have a common axis
(resp. center) but not a common center (resp. axis). By duality,
$N_a(i)=N_c(i)$ for any $i$. The subgroup of all elations with
both the same axis and center is a $E_{q}$; the subgroup
of all elations with a given axis (resp. center) is a
$E_{q^2}$. Using the Gaussian coefficient, this implies
\begin{footnotesize}
$$ \tilde{\chi}(\Delta(L_r))=-\left( \sum_{i=1}^d N_{ac}(i)\cdot(-1)^i r^{\binom{i}{2}} + 2\sum_{i=1}^{2d}N_{a}(i)\cdot(-1)^i r^{\binom{i}{2}} \right) $$
$$ = -\left( (q^2+q+1)(q+1)\sum_{i=1}^d (-1)^i r^{\binom{i}{2}}\binom{d}{i}_r +\right.$$
$$ \left.+2(q^2+q+1)\left(\sum_{i=1}^{2d}(-1)^i
r^{\binom{i}{2}}\binom{d}{i}_r - (q+1)\sum_{i=1}^d (-1)^i
r^{\binom{i}{2}}\binom{d}{i}_r\right) \right). $$
\end{footnotesize}
Using the property $\binom{n}{k}_r
=r^k\binom{n-1}{k}_r+\binom{n-1}{k-1}_r$, the claim follows.

\item Suppose $r\mid(q^2+q+1)$ and $r\ne3$. Then $S_r$ is contained in a maximal subgroup $C_{q^2+q+1}: C_3$ of $G$ and determines it uniquely. Thus $G$ has exactly $[G:(C_{q^2+q+1}: C_3)]$ subgroups $C_r$, 
and
$\tilde{\chi}(\Delta(L_r))=-\frac{q^3(q-1)^2(q+1)}{3}\cdot(-1)$.
\item Suppose $r\mid(q+1)$ and $r\ne2$. Then $S_r$ is contained in a $C_{q+1}$ and hence is cyclic. $C_{q+1}\leq G$ is uniquely determined by its fixed points $P\in\PG(2,q)$ and $Q,R\in\PG(2,q^2)\setminus\PG(2,q)$, where $R=Q^q$. Hence, we have $q^2+q+1$ choices for $P$, and then $q^2$ choices for the line $QR$ and $\frac{q^2-q}{2}$ choices for $\{Q,R\}$ on $QR$.
Thus,
$\tilde{\chi}(\Delta(L_r))=-\frac{(q^2+q+1)q^2(q^2-q)}{2}\cdot(-1)$.
\item Suppose $r\mid(q-1)$ and $r\notin\{2,3\}$. Then $S_r$ is contained in a maximal subgroup $G_T\cong (C_{q-1})^2: \Sym(3)$; hence, the elementary abelian $r$-subgroups of $G$ have size $r$ or $r^2$.
A subgroup $E_{r^2}$ is contained in exactly one group $G_T$; thus,
$G$ has exactly $[G:G_T]$ subgroups
$E_{r^2}$. A subgroup $C_r$ made by homologies is uniquely
determined by its center and axis; hence, $G$ has exactly
$(q^2+q+1)q^2$ subgroups $C_r$ of homologies. A subgroup $C_r$
not made by homologies stabilizes pointwise an $\fq$-rat.
triangle $T$; $G_T$ has exactly $3$ subgroups $C_r$ of
homologies and $\frac{r^2-1}{r-1}-3=r-2$ subgroups $C_r$ not of
homologies. Then $G$ has exactly
$[G: G_T]\cdot(r-2)$ subgroups
$C_r$ not of homologies. Altogether,
\begin{footnotesize}
$$
\tilde{\chi}(\Delta(L_r))=-\left(\frac{q^3(q+1)(q^2+q+1)}{6}\cdot r
- \left( (q^2+q+1)q^2 + \frac{q^3(q+1)(q^2+q+1)(r-2)}{6} \right)
\right). $$
\end{footnotesize}
\item Suppose $r=2$ and $q$ odd. Every involution of $G$ is a homology (see \cite{Mitchell}); hence a subgroup $C_2\leq G$ is uniquely determined by the choice of the center and axis, and there are $(q^2+q+1) q^2$ such choices.
A subgroup $E_4\leq G$ is uniquely determined by the
$\mathbb{F}_q$-rat. triangle fixed pointwise by $E_4$; hence $G$ has exactly
$[G: G_T]$ subgroups $E_4$. No
more than $3$ homologies of the same order in ${\rm PSL}(3,q)$ can
commute pairwise (as they stabilize the center and axis of each
other, and the homology of a given order, center and axis is unique).
Hence, $G$ has no subgroups $E_8$. Thus,
$\tilde{\chi}(\Delta(L_2))=-\left(q^2(q^2+q+1)\cdot(-1)+\frac{q^3(q+1)(q^2+q+1)}{6}\cdot2\right)$.
\item Suppose $r=3\mid(q-1)$. Then $r\mid(q^2+q+1)$, and the elements of order $3$ are either homologies, or stabilize pointwise a triangle.
$G$ has exactly $q^2(q^2+q+1)$ homologies of order $3$, $[G:G_T]$ elements of order $3$ stabilizing an $\mathbb{F}_q$-rat.
triangle $T$,
and $[G:G_{\tilde T}]$ elements of order $3$ stabilizing an
$\mathbb{F}_{q^3}\setminus\mathbb{F}_q$-rat. triangle
$\tilde{T}$.
Altogether, $G$ has
$q^2(q^2+q+1)+\frac{q^3(q+1)(q^2+q+1)}{6}+\frac{q^3(q-1)^2(q+1)}{3}$ elements of order $3$.
Let $H\leq G$ with $H\cong E_9$. Since $9\nmid(q^2+q+1)$, $H$ is
contained in a max. subgroup $G_T\cong(C_{q-1})^2:\Sym(3)$. This implies in particular that $G$ has no subgroups $E_{27}$.
Suppose that $H$ contains a homology, with center $C$. Then $C$ is a
vertex of $T$, and $H$ is the unique $E_9\leq G$
stabilizing $T$ pointwise. Thus, the number of $E_9\leq G$
containing a homology is $[G:G_T]$.
Suppose that $H$ does not contain any homology. It is easily seen
that there exists $\sigma\in H\setminus\{1\}$ which stabilizes $T$ pointwise.
Hence, $H=\langle\sigma,\tau\rangle$ where $\sigma$ and $\tau$ act
respectively trivially and with a $3$-cycle on $T$. Up to
conjugation, $T$ is the fundamental triangle and
$\sigma=\diag(\lambda,\lambda^2,1)$, where $\lambda\in\mathbb{F}_q$
has order $3$; up to replacing $\tau$ with $\tau^2$, $\tau:(X:Y:Z)\mapsto(\mu Y : \rho Z : X)$ with
$\mu,\rho\in\mathbb{F}_q^*$.
The short orbits of $H$ in ${\rm PG}(2,\overline{\mathbb{F}}_q)$ are
exactly the four distinct triangles $T,T_1,T_2,T_3$ stabilized
pointwise by some $C_3\leq H$. By direct checking,
$T_1,T_2,T_3$ are either $\mathbb{F}_{q}$-rat. or
$\mathbb{F}_{q^3}\setminus\mathbb{F}_{q}$-rat., according to
$3\mid\frac{q-1}{o(\mu\rho)}$ or $3\nmid\frac{q-1}{o(\mu\rho)}$,
respectively. Therefore, $\langle\sigma\rangle$ is contained in
exactly $\frac{1}{3}\cdot\frac{(q-1)^2}{3}$ subgroups $E_9\leq G$
which stabilize four $\mathbb{F}_q$-rat. triangles; and
$\langle\sigma\rangle$ is contained in exactly
$\frac{1}{3}\cdot\frac{2(q-1)^2}{3}$ subgroups $E_9\leq G$ which
stabilize one $\mathbb{F}_q$- and three
$\mathbb{F}_{q^3}\setminus\mathbb{F}_q$-rat. triangles.
Viceversa, any subgroup $E_9\leq G$ stabilizing only
$\mathbb{F}_q$-rat. triangle (resp. one $\mathbb{F}_q$- and
three $\mathbb{F}_{q^3}\setminus\mathbb{F}_q$-rat. triangles)
contains exactly $4$ subgroups (resp. $1$ subgroup) $C_3$
stabilizing pointwise an $\mathbb{F}_q$-rat. triangle. Then, a
double counting argument shows that $G$ contains exactly:
$[G:G_T]\cdot\frac{(q-1)^2}{9}\cdot\frac{1}{4}$
subgroups $E_9$ with no homology and only $\mathbb{F}_q$-rat.
fixed triangles;
$[G: G_T]\cdot\frac{2(q-1)^2}{9}$ subgroups $E_9$
stabilizing an $\mathbb{F}_{q^3}\setminus\mathbb{F}_q$-rat.
triangle.
The claim now follows by direct computation.
\end{itemize}
\end{proof}


\section*{Acknowledgments}

The research of F. Dalla Volta and G. Zini was supported by the
Italian National Group for Algebraic and Geometric Structures and
their Applications (GNSAGA - INdAM).

\begin{footnotesize}
\begin{center}
\begin{table}[!ht]
\begin{footnotesize}
\caption{Conj. classes of subgroups $H\leq G=\PSL(3,q)$,
$q=2^{p}$, $p$ an odd prime,  with
$\mu(H)\ne0$}\label{tabella:mobius}
\def\arraystretch{1.3}
\begin{tabular}{|c|c|c|c|c|}
\hline $H$ &$\mathcal{C}_i$ & elements of the plane stabilized by
$H$ & $N_G(H)$ & $\mu(H)$ \\ \hline
$G$ & - &  - & $H$ & $1$ \\
$E_{q^2} : \GL(2,q)$ & $\mathcal{C}_1 $ & an $\mathbb{F}_q$-rat. point & $H$ & $-1$ \\
$E_{q^2} : \GL(2,q)$ &$\mathcal{C}_1 $ &  an $\mathbb{F}_q$-rat. line & $H$ & $-1$ \\
$(C_{q-1})^2: {\rm Sym}(3)$ & $\mathcal{C}_2$ & an $\mathbb{F}_q$-rat. triangle & $H$ & $-1$ \\
$C_{q^2+q+1}: C_3$ & $\mathcal{C}_3$ &  an $\mathbb{F}_{q^3} \setminus\mathbb{F}_{q}$-rat. triangle & $H$ & $-1$ \\
$\PSL(3,2)$ &  $\mathcal{C}_5$ & a subplane of order $2$ & $H$ & $-1$ \\
$E_q^{1+2}:(C_{q-1})^2$ &  $\mathcal{C}_1$ (N) & an $\mathbb{F}_q$-rat. point $P$ and an $\mathbb{F}_q$-rat. line $\ell$, $P\in\ell$ & $H$ & $1$ \\
$\GL(2,q)$ &  $\mathcal{C}_1$ (N) & an $\mathbb{F}_q$-rat. point $P$ and an $\mathbb{F}_q$-rat. line $\ell$, $P\notin\ell$ & $H$ & $1$ \\
$E_q : (C_{q-1})^2$ &  $\mathcal{C}_1$ (N) & two $\mathbb{F}_q$-rat. points and two $\mathbb{F}_q$-rat. lines & $H$ & $-1$ \\
$(C_{q-1})^2: C_2$ &   $\mathcal{C}_1,\mathcal{C}_2$ (N) &  an $\mathbb{F}_q$-rat. triangle and one of its vertexes & $H$ & $1$ \\
$\Sym(4)$ & $\mathcal{C}_1,\mathcal{C}_5$ (N) & a subp. $\Pi$ of order $2$ and a point of $\Pi$ & $H$ & $1$ \\
$\Sym(4)$ &  $\mathcal{C}_1,\mathcal{C}_5$ (N) & a subp. $\Pi$ of order $2$ and a line of $\Pi$ & $H$ & $1$ \\
$C_7: C_3$ &  $\mathcal{C}_2,\mathcal{C}_5$ (N) & a subp. $\Pi$ of order $2$ and a triangle not in $\Pi$ & $H$ & $1$ \\
$D_8$ &   $\mathcal{C}_1,\mathcal{C}_5$ (N)  & a subp. $\Pi$ of order $2$, a point $P$ and a line $\ell$, $P\in\ell$ & $E_q\, . \, E_4$ & $-\frac{q}{2}$ \\
\hline
\end{tabular}
\end{footnotesize}
\end{table}
\end{center}
\end{footnotesize}

\begin{footnotesize}
\begin{center}
\begin{table}[!ht]
\begin{footnotesize} \caption{Subgroups $H$ of $G=\PSL(3,4)$ with
$\mu(H)\ne0$}\label{tabella:mobius4}
\def\arraystretch{1.3}
\begin{tabular}{|c|c|c||c|c|c||c|c|c|}
\hline $H\cong$ & conj. cl. & $\mu_G(H)$ & $H\cong$ & conj. cl. &
$\mu(H)$ &$H\cong$ & conj. cl. & $\mu(H)$ \\ \hline
$G$ & $1$ & $1$ & $E_9: C_4$ & $3$ & $2$ & $D_8$ & $3$ & $-4$ \\
$E_{16}\,.\,\SL(2,4)$ & $2$ & $-1$ & $\Sym(4)$ & $6$ & $2$ & $\Sym(3)$ & $1$ & $-14$ \\
$\Alt(6)$ & $3$ & $-1$ & $C_7: C_3$ & $1$ & $2$ & $C_4$ & $3$ & $-8$ \\
$\PSL(3,2)$ & $3$ & $-1$ & $\Alt(4)$ & $6$ & $-2$ & $C_3$ & $1$ & $24$ \\
$\PSU(3,2)$ & $1$ & $-1$ & $\Alt(4)$ & $1$ & $-1$ & $C_2$ & $1$ & $544$ \\
$E_4^{1+2}: C_3$ & $1$ & $1$ & $D_{10}$ & $1$ & $-3$ & $\{1\}$ & $1$ & $-120960$ \\
$\Alt(5)$ & $7$ & $1$ & $Q_8$ & $1$ & $2$ & & & \\
\hline
\end{tabular}
\end{footnotesize}
\end{table}
\end{center}
\end{footnotesize}

\begin{small}
\begin{center}
\begin{table}[!ht]
\begin{footnotesize}
\caption{Reduced Euler characteristic of the order complex of the
poset of $r$-subgroups of $\PGL(3,q)$}\label{tabella:euler}
\def\arraystretch{1.3}
\begin{tabular}{|c|c||c|c|}
\hline
prime $r$ & $\tilde{\chi}(\Delta(L_r))$ & prime $r$ & $\tilde{\chi}(\Delta(L_r))$ \\
\hline
$r\nmid|PGL(3,q)|$ & $0$ & $r\mid(q-1)$, $r\notin\{2,3\}$ & $-\frac{q^2(q^2+q+1)(q^2+q-3)}{3}$ \\
$r\mid q$ & $-(q^3-1)$ & $r=2\mid(q-1)$ & $-\frac{q^2(q^2+q+1)(q^2+q-3)}{3}$ \\
$3\ne r\mid(q^2+q+1)$ & $\frac{q^3(q-1)^2(q+1)}{3}$ & $r=3\mid(q-1)$ & $-\frac{q^2(q^6-q^4+7q^3-7q-8)}{8}$  \\
$2\ne r\mid(q+1)$ & $\frac{q^3(q^3-1)}{2}$ & & \\
\hline
\end{tabular}
\end{footnotesize}
\end{table}
\end{center}
\end{small}

\begin{small}
\begin{center}
\begin{table}[!ht]
\begin{footnotesize}
\caption{$\mu(H)$ for any $H\leq G$ which is intersection of maximal
subgroups of $G$}\label{tabella:teorema}
\def\arraystretch{1.3}
\begin{tabular}{|c|c|c|c|}
\hline
& $H$ & $N_G(H)$ & $\mu(H)$ \\
\hline
{\bf Line 1}& $E_{q^2}:\GL(2,q)=G_P$ & $H$ & $-1$ \\
{\bf Line 2}&$E_{q^2}:\GL(2,q)=G_{\ell}$ & $H$ & $-1$ \\
{\bf Line 3}&$(C_{q-1})^2:{\rm Sym}(3)=G_T$ & $H$ & $-1$ \\
{\bf Line 4}&$C_{q^2+q+1}:C_3=G_{\tilde{T}}$ & $H$ & $-1$ \\
{\bf Line 5}&$\PSL(3,2)=G_{\Pi}$ & $H$ & $-1$ \\
{\bf Line 6}&$E_q^{1+2}:(C_{q-1})^2$ & $H$ & $1$ \\
{\bf Line 7}&$\GL(2,q)$ & $H$ & $1$\\
{\bf Line 8}&$E_{q^2}:(C_{q-1})^2=G_{P,Q}$ & $H:C_2$ & $0$ \\
{\bf Line 9}&$E_{q^2}:(C_{q-1})^2=G_{\ell,r}$ & $H:C_2$ & $0$ \\
{\bf Line 10}&$E_{q^2}:C_{q-1}=G_{P_1,\ldots,P_{q+1}}$ & $E_{q^2}:\GL(2,q)$ & $0$ \\
{\bf Line 11}&$E_{q^2}:C_{q-1}=G_{\ell_1,\ldots,\ell_{q+1}}$ & $E_{q^2}:\GL(2,q)$ & $0$ \\
{\bf Line 12}&$E_q:(C_{q-1})^2$ & $H$ & $-1$ \\
{\bf Line 13}&$(C_{q-1})^2:C_2$ & $H$ & $1$ \\
{\bf Line 14}&$(C_{q-1})^2$ & $(C_{q-1})^2:{\rm Sym}(3)$ & $0$ \\
{\bf Line 15}&$E_q:C_{q-1}=G_{P_1,\ldots,P_{q+1},\ell}$ & $ E_{q^2}:(C_{q-1})^2 $ & $0$ \\
{\bf Line 16}&$E_q:C_{q-1}=G_{\ell_1,\ldots,\ell_{q+1},P}$ & $E_{q^2}:(C_{q-1})^2$ & $0$ \\
{\bf Line 17}&$C_{2(q-1)}$ & $E_q\times C_{q-1}$ & $0$ \\
{\bf Line 18}&$E_q= G_{P_1,\ldots,P_{q+1},\ell_1,\ldots,\ell_{q+1}}$ & $E_q^{1+2}:(C_{q-1})^2$ & $0$ \\
{\bf Line 19}&$C_{q-1}= G_{P_1,\ldots,P_{q+1},\ell_1,\ldots,\ell_{q+1}}$ & $\GL(2,q)$ & $0$ \\
{\bf Line 20}&${\rm Sym}(4)=G_{P,\Pi}$ & $H$ & $1$ \\
{\bf Line 21}&${\rm Sym}(4)=G_{\ell,\Pi}$ & $H$ & $1$ \\
{\bf Line 22}&$C_7:C_3$ & $H$ & $1$ \\
{\bf Line 23}&$D_8$ & $E_q\,.\,E_4$ & $-\frac{q}{2}$ \\
{\bf Line 24}&$C_7\leq G_{T,\Pi}$ & $\begin{cases} C_{q^2+q+1}:C_3 & \textrm{if } p>3 \\ (C_7)^2:C_3 & \textrm{if } p=3 \end{cases}$ & $0$ \\
{\bf Line 25}&${\rm Sym}(3)$ & $H\times C_{q-1}$ & $0$ \\
{\bf Line 26}&$C_4$ & $E_q\,.\,E_{2q}$ & $0$ \\
{\bf Line 27}&$E_4\leq G_{\ell_1,\ell_2,\ell_3}$ & $E_{q^2}:{\rm Sym}(3)$ & $0$ \\
{\bf Line 28}&$E_4\leq G_{P_1,P_2,P_3}$ & $E_{q^2}:{\rm Sym}(3)$ & $0$ \\
{\bf Line 29}&$C_3$ & $C_{q^2-1}:C_2$ &$0$ \\
{\bf Line 30}&$C_2$ & $E_q^{1+2}:C_{q-1}$ & $0$ \\
{\bf Line 31}&$\{1\}$ & $G$ & $0$ \\
\hline
\end{tabular}
\end{footnotesize}
\end{table}
\end{center}
\end{small}


\end{document}